\theoremstyle{remark}
\newtheorem{example}{Example}[section]
\newtheorem{rem}[example]{Remark}
\theoremstyle{definition}
\newtheorem{defn}[example]{Definition}
\theoremstyle{plain}
\newtheorem{prop}[example]{Proposition}
\newtheorem{lemma}[example]{Lemma}
\newtheorem{thm}[example]{Theorem}
\DeclareMathOperator{\image}{image}
\DeclareMathOperator{\Hom}{Hom}
\DeclareMathOperator{\End}{End}
\DeclareMathOperator{\Ext}{Ext}
\DeclareMathOperator{\SU}{SU}
\DeclareMathOperator{\GL}{GL}
\DeclareMathOperator{\PGL}{PGL}
\DeclareMathOperator{\Stab}{Stab}
\DeclareMathOperator{\Grass}{Gr}
\DeclareMathOperator{\rank}{rank}
\begin{document}
\bibliographystyle{plain}

\newcommand{\Cpx}{\mathsf{Cpx}}
\newcommand{\bbZ}{\mathbb{Z}}
\newcommand{\bbC}{\mathbb{C}}
\newcommand{\bbH}{\mathbb{H}}
\newcommand{\bbA}{\mathbb{A}}
\newcommand{\bbP}{\mathbb{P}}
\newcommand{\longto}[1][]{\stackrel{#1}{\longrightarrow}}
\newcommand{\dual}{\mathrm{dual}}
\newcommand{\free}{\mathrm{free}}
\newcommand{\univ}{\mathrm{univ}}
\newcommand{\reg}{\mathrm{reg}}
\newcommand{\pr}{\mathrm{pr}}
\newcommand{\id}{\mathrm{id}}
\newcommand{\calM}{\mathcal{M}}
\newcommand{\calX}{\mathcal{X}}
\newcommand{\calE}{\mathcal{E}}
\newcommand{\EPoinc}{\calE^{\text{Poincar\'{e}}}}
\newcommand{\Euniv}{\calE^{\univ}}
\newcommand{\stackV}{\mathcal{V}}
\newcommand{\stackU}{\mathcal{U}}
\newcommand{\calO}{\mathcal{O}}
\newcommand{\Gm}{\mathbb{G}_m}

\title[Special instanton bundles]{Independent parameters for special instanton bundles on $\bbP^{2n+1}$}

\author{Norbert Hoffmann}
\address{Freie Universit\"at Berlin, Institut f\"ur Mathematik, Arnimallee 3, 14195 Berlin, Germany}
\email{norbert.hoffmann@fu-berlin.de}

\subjclass[2000]{14J60, 14D20, 14E08}

\keywords{instanton bundle, moduli space, rationality}

\date{}

\begin{abstract}
Motivated by Yang-Mills theory in $4n$ dimensions, and generalizing the notion due to Atiyah, Drinfeld, Hitchin and Manin for $n=1$, 
Okonek, Spindler and Trautmann introduced instanton bundles and special instanton bundles as certain algebraic vector bundles of rank $2n$
on the complex projective space $\bbP^{2n+1}$. The moduli space of special instanton bundles is shown to be rational.
\end{abstract}

\maketitle

\section{Introduction}

An instanton is a self-dual solution of the $\SU( 2)$ Yang-Mills equations on the Euclidean sphere $S^4$ \cite[Exp. 1]{asterisque}.
Via the Penrose transformation, these correspond to certain algebraic vector bundles of rank $2$ on the complex projective space $\bbP^3$,
which are consequently called instanton bundles \cite{ADHM, asterisque}. 

Salamon \cite{salamon} and Corrigan-Goddard-Kent \cite{corrigan-goddard-kent} have independently generalized this picture to Yang-Mills theory in dimension $4n$,
replacing $S^4$ and $\bbP^3$ by the quaternionic projective space $\bbH \bbP^n$ and the twistor space over it, which is the complex projective space $\bbP^{2n+1}$.
Now certain Yang-Mills connections on $\bbH \bbP^n$ correspond to certain algebraic vector bundles on $\bbP^{2n+1}$; cf. \cite[\S 3]{capria-salamon} and
\cite[Corollary to Main Theorem 2]{kametani-nagatomo}. Motivated by this generalization of the Penrose transform,
Okonek and Spindler \cite{okonek-spindler} extended the notion of instanton bundles to algebraic vector bundles of rank $2n$ on $\bbP^{2n+1}$.

A fundamental tool in the study of algebraic vector bundles on projective spaces is their description in terms of monads
\cite{horrocks,barth-hulek,okonek-schneider-spindler}. It often allows to describe the vector bundles at hand in terms of some linear algebra data.
In particular, the Beilinson spectral sequence \cite{beilinson} yields a correspondence between instanton bundles and instanton monads;
we recall this in Section \ref{sec:instantons} below.

Hirschowitz and Narasimhan \cite{hirschowitz-narasimhan} have introduced certain special instanton bundles on $\bbP^3$, calling them special `t Hooft bundles.
They have studied their moduli space, and proved in particular that it is rational. Recently Tikhomirov \cite{tikhomirov} has shown that the moduli space of
all instanton bundles on $\bbP^3$ with fixed odd $c_2$ is irreducible, and together with Markushevich also that it is rational \cite{markushevich-tikhomirov}.

Generalizing from $\bbP^3$ to $\bbP^{2n+1}$, Spindler and Trautmann \cite{spindler-trautmann} have introduced the notion of special instanton bundles
on $\bbP^{2n+1}$ and constructed their moduli space. They also prove that this moduli space is non-empty, irreducible, and smooth of dimension
$2n(k+1) + (2n+2)^2 - 7$ if one fixes the quantum number $k+1$ of these instantons. The main result of the present paper, Theorem \ref{mainthm},
states that the Spindler-Trautmann moduli space of special instanton bundles on $\bbP^{2n+1}$ is rational.
So these instanton bundles depend on $2n(k+1) + (2n+2)^2 - 7$ independent complex parameters.

Like in the case of vector bundles on a curve \cite{king-schofield, rationality, par}, the proof involves the rationality of some Severi-Brauer varieties,
whose Brauer classes are related to the existence of Poincar\'e families, or universal families, of vector bundles parameterized by the moduli space.
Spindler and Trautmann determined in \cite{spindler-trautmann} when such Poincar\'e families of special instanton bundles exist. Somewhat surprisingly,
Theorem \ref{mainthm} proves rationality of the moduli space even in the cases where there is no Poincar\'e family. A similar phenomenon has been observed
for moduli spaces of vector bundles on rational surfaces \cite{schofield, costa-miroroig}.

Another main ingredient in the proof is the no-name lemma about rationality of quotients of vector spaces modulo linear groups;
cf. for example \cite{colliotthelene-sansuc}. It allows us to finally reduce to a rationality problem for quotients modulo $\PGL_2$, where the invariant ring
is known explicitly; that's how we prove rationality of a moduli space without at the same time constructing a Poincar\'e family on some open part of it.
Compared to the the special case $n = 1$ of bundles on $\bbP^3$, where the rationality is proved in \cite{hirschowitz-narasimhan}, this ingredient is new.
Then some results about Severi-Brauer varieties from \cite{hirschowitz-narasimhan} complete the proof.

The structure of the present text is as follows. In Section \ref{sec:instantons}, we recall the definition of mathematical instanton bundles
that we will work with, and we review the correspondence to instanton monads. In Section \ref{sec:moduli}, we recall the notion of special instanton bundles
due to Spindler and Trautmann, and we also review their construction of moduli spaces for these; both aspects are based on the correspondence to instanton monads.
Section \ref{sec:rationality} is devoted to the proof of the main result, Theorem \ref{mainthm}.

\subsubsection*{Acknowledgements}
I learned about this problem from Laura Costa. I thank her, Rosa Maria Mir\'{o}-Roig, and Alexander Schmitt for introducing me into the subject
and for some helpful suggestions. The work was supported by the SFB 647: Raum - Zeit - Materie.

\raggedbottom

\section{Mathematical instanton bundles} \label{sec:instantons}
In this section, we recall the notion of (mathematical) instanton bundles, and their description in terms of monads.

We work over the odd-dimensional complex projective space
\begin{equation*}
  \bbP := \bbP^{2n+1} \big/ \bbC, \quad n \geq 1.
\end{equation*}
An algebraic vector bundle $E$ over $\bbP$ is said to have \emph{natural cohomology} if there is at most one $q$ with $H^q( \bbP, E) \neq 0$.
\begin{defn}
  Let $k \geq 1$ be an integer. A \emph{$k$-instanton bundle} is an algebraic vector bundle $E$ over $\bbP = \bbP^{2n+1}$ with the following properties:
  \begin{itemize}
   \item[i)] The rank of $E$ is $2n$.
   \item[ii)] The Chern polynomial of $E$ is $c_t( E) = (1 - t^2)^{-k}$.
   \item[iii)] For each $l \in \bbZ$ with $-2n-1 \leq l \leq 0$, the twisted vector bundle
    \begin{equation*}
      E( l) := E \otimes \calO_{\bbP}( 1)^{\otimes l}
    \end{equation*}
    has natural cohomology.
  \end{itemize} 
\end{defn}
\begin{rem}
  i) Every $k$-instanton bundle is simple by \cite[Theorem 2.8]{ancona-ottaviani}.

  ii) Some authors moreover require that $E$ has trivial splitting type. Note that this is an open condition.
  It is not used here, so we don't include it in our definition.
\end{rem}
Let $\Cpx( \bbP)$ denote the category of complexes of coherent $\calO_{\bbP}$-modules. By a \emph{monad}, we mean a complex of vector bundles
\begin{equation*}
  E^{\bullet} = [ 0 \longto E^{-1} \longto[ i] E^0 \longto[ p] E^1 \longto 0 ] \in \Cpx( \bbP)
\end{equation*}
such that $p$ is surjective, and $i$ is an isomorphism onto a subbundle. As \emph{morphisms} of monads, we take the morphisms in $\Cpx( \bbP)$.
The \emph{cohomology} of the monad $E^{\bullet}$ is the vector bundle
\begin{equation*}
  E := \ker( p)/\image( i).
\end{equation*}
\begin{defn}
  A monad $E^{\bullet}$ is a \emph{$k$-instanton monad} for $k \geq 1$, if
  \begin{equation*}
    E^{-1} \cong \calO_{\bbP}( -1)^k, \qquad E^0 \cong \calO_{\bbP}^{2n+2k}, \qquad \text{and}\qquad E^1 \cong \calO_{\bbP}( 1)^k.
  \end{equation*}
\end{defn}
The following standard facts show that the categories of $k$-instanton bundles and of $k$-instanton monads are equivalent.
\begin{prop} \label{prop:equivalence} \begin{itemize}
 \item[i)] If $E^{\bullet}$ is a $k$-instanton monad, then its cohomology $E = \ker( p)/\image( i)$ is a $k$-instanton bundle.
 \item[ii)] If $E^{\bullet}$ and $F^{\bullet}$ are two $k$-instanton monads with cohomologies $E$ and $F$,
  then $\Hom_{\bbP}( E, F) = \Hom_{\Cpx( \bbP)}( E^{\bullet}, F^{\bullet})$.
 \item[iii)] Every $k$-instanton bundle $E$ is isomorphic to the cohomology of some $k$-instanton monad $E^{\bullet}$.
\end{itemize} \end{prop}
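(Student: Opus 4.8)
The common thread is that a $k$-instanton monad $E^{\bullet}$ is quasi-isomorphic in the derived category $D(\bbP)$ to its cohomology bundle $E$ placed in degree $0$: since $i$ is a subbundle inclusion and $p$ is surjective, $\mathcal{H}^{-1}(E^{\bullet})=\mathcal{H}^{1}(E^{\bullet})=0$ and $\mathcal{H}^{0}(E^{\bullet})=E$. The only external input is Bott's formula, i.e.\ that $H^{q}(\bbP^{2n+1},\calO(d))$ is nonzero exactly when $q=0,\ d\geq 0$ or $q=2n+1,\ d\leq -2n-2$. For part i), the rank is $2n+2k-k-k=2n$ by additivity, and $c_{t}(E)=c_{t}(\calO^{2n+2k})\,c_{t}(\calO(-1)^{k})^{-1}c_{t}(\calO(1)^{k})^{-1}=(1-t)^{-k}(1+t)^{-k}=(1-t^{2})^{-k}$ by the Whitney formula. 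For the natural cohomology of $E(l)$ with $-2n-1\leq l\leq 0$, I would run the hypercohomology spectral sequence $E_{1}^{s,t}=H^{t}(\bbP,E^{s}(l))\Rightarrow H^{s+t}(\bbP,E(l))$, $s\in\{-1,0,1\}$. Bott vanishing kills almost the whole $E_{1}$-page: for $-2n\leq l\leq -2$ it vanishes entirely; for $l=-2n-1$ only $E_{1}^{-1,2n+1}$ survives, so the cohomology of $E(l)$ sits in degree $2n$; for $l=-1$ only $E_{1}^{1,0}=H^{0}(\calO)^{\oplus k}$ survives, giving $H^{1}(\bbP,E(-1))\cong\bbC^{k}$. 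In the remaining case $l=0$ the $E_{1}$-page is concentrated in $E_{1}^{0,0}=H^{0}(\calO^{2n+2k})$ and $E_{1}^{1,0}=H^{0}(\calO(1)^{k})$ with single differential $d_{1}=H^{0}(p)$, so $H^{q}(\bbP,E)=0$ for $q\neq 0,1$ and natural cohomology at $l=0$ amounts to $H^{0}(\bbP,E)=0$ (equivalently $H^{0}(p)$ injective). This is the point I expect to cost the most effort. I would deduce it from the shape of $c_{t}(E)=(1-t^{2})^{-k}$: a nowhere-vanishing section of $E$ would split off a trivial summand and leave a rank-$(2n-1)$ bundle with top Chern class $\binom{k+n-1}{n}\neq 0$, which is impossible, while a section with zeros is excluded by the Serre-construction constraints on such $E$; this is standard (cf.\ \cite{okonek-schneider-spindler, spindler-trautmann}).

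For part ii), I would compute both sides through the hyper-$\Ext$ spectral sequence $E_{1}^{p,q}=\bigoplus_{i}\Ext_{\bbP}^{q}(E^{i},F^{i+p})\Rightarrow\Hom_{D(\bbP)}^{p+q}(E^{\bullet},F^{\bullet})$. Every entry is a sum of cohomology groups of $\calO$, $\calO(-1)$ or $\calO(-2)$; since $2n+1\geq 3$, the groups $H^{0}(\calO(-1))$, $H^{1}(\calO(-1))$, $H^{1}(\calO(-2))$, $H^{2}(\calO(-2))$ all vanish, which forces $E_{1}^{p,-p}=0$ for $p\neq 0$. Hence $\Hom_{D(\bbP)}^{0}(E^{\bullet},F^{\bullet})=E_{2}^{0,0}$, the group of chain maps $E^{\bullet}\to F^{\bullet}$ modulo chain homotopies; but the same vanishings ($\Hom(\calO^{2n+2k},\calO(-1)^{k})=\Hom(\calO(1)^{k},\calO^{2n+2k})=0$) kill every homotopy, so $E_{2}^{0,0}=\Hom_{\Cpx(\bbP)}(E^{\bullet},F^{\bullet})$. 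On the other hand $\Hom_{D(\bbP)}^{0}(E^{\bullet},F^{\bullet})=\Hom_{D(\bbP)}(E,F)=\Hom_{\bbP}(E,F)$ since $E^{\bullet}\simeq E[0]$, $F^{\bullet}\simeq F[0]$ and $E,F$ are locally free. Tracing through the identifications shows the resulting isomorphism is the tautological one (pass to $\mathcal{H}^{0}$), which is the assertion. I do not anticipate a real obstacle here; one could instead carry out the lifting by hand using the displays of the two monads, but the spectral sequence packages the book-keeping.

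For part iii), given a $k$-instanton bundle $E$ I would feed it into a Beilinson-type spectral sequence \cite{beilinson}. Hypothesis iii) together with a Riemann--Roch computation of $\chi(\bbP,E(l))$ for $-2n-1\leq l\leq 0$ (with relevant values $\chi(E(-1))=-k$ and $\chi(E)=2n(1-k)$) pins down every $H^{q}(\bbP,E(l))$ in this range: the only nonzero ones are $H^{1}(E(-1))\cong\bbC^{k}$, its Serre dual $H^{2n}(E(-2n-1))\cong\bbC^{k}$, and $H^{1}(E)\cong\bbC^{2n(k-1)}$. Consequently the spectral sequence collapses, and after rewriting the intermediate $\Omega^{p}(p)$-terms by their Euler/Koszul resolutions and cancelling the redundant trivial summands one obtains a three-term complex with terms $\calO(-1)^{a}$, $\calO^{b}$, $\calO(1)^{c}$; one reads off $a=c=h^{1}(E(-1))=k$ and then $b=2n+2k$ from the additivity $\chi(E)=b-a\,\chi(\calO(-1))-c\,\chi(\calO(1))$ used in part i). The reconstruction property of the spectral sequence identifies $E$ with the cohomology of this complex, and the two maps form a monad because $E$ is locally free. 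The main obstacle here is combinatorial: verifying that all off-diagonal Beilinson cohomology vanishes and carrying out the reduction to the $\calO$-monad cleanly; this is standard (cf.\ \cite{okonek-schneider-spindler, spindler-trautmann}), and I would organize it around the Riemann--Roch computation of $\chi(E(l))$.
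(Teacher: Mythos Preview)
Your overall plan matches the paper's: hypercohomology (equivalently the two short exact sequences of the monad) for i), the vanishing of the relevant groups $\Ext^q(E^i,F^j)$ for ii), and the Beilinson spectral sequence for iii). Part ii) is correct and is precisely the paper's argument (which quotes \cite[Lemma II.4.1.3]{okonek-schneider-spindler}) rephrased as a hyper-$\Ext$ spectral sequence; for iii) both you and the paper defer the honest work to \cite{beilinson,okonek-spindler}.

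The genuine gap is in part i), at the step $H^0(E)=0$. First, the nowhere-vanishing case has a slip: the ``top Chern class'' of a rank-$(2n-1)$ quotient is $c_{2n-1}$, and the coefficient of $t^{2n-1}$ in $(1-t^2)^{-k}$ is zero, not $\binom{k+n-1}{n}$. The contradiction you actually want is that $c_{2n}$ of a rank-$(2n-1)$ bundle must vanish, whereas Whitney forces $c_{2n}(E/\calO)=c_{2n}(E)=\binom{k+n-1}{n}\neq 0$. More seriously, your handling of a section with zeros --- ``excluded by the Serre-construction constraints'' --- is not an argument. Nothing you have established rules out a section of $E$ vanishing along some positive-codimensional locus, and the Serre construction (which relates rank-$2$ bundles to codimension-$2$ subschemes) has no evident bearing on a rank-$2n$ bundle on $\bbP^{2n+1}$; in that case the quotient $E/\calO$ is only a torsion-free sheaf, and such sheaves can certainly have $c_{2n}\neq 0$. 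The paper closes this gap by a different route: it invokes \cite[Theorem 2.8]{ancona-ottaviani} to conclude that $\ker(p)$ is \emph{stable}, hence $H^0(\ker p)=0$, and then the exact sequence
\[
  H^0(\ker p)\longrightarrow H^0(E)\longrightarrow H^1(\calO_{\bbP}(-1)^k)=0
\]
gives $H^0(E)=0$. Stability of $\ker(p)$ is the missing input; without it you have not excluded that $H^0(E)$ and $H^1(E)$ are simultaneously nonzero, so natural cohomology at $l=0$ is left unproved.
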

\begin{proof}
  i) Let $E$ be the cohomology of a $k$-instanton monad
  \begin{equation*}
    0 \longto \calO_{\bbP}( -1)^k \longto[ i] \calO_{\bbP}^{2n+2k} \longto[ p] \calO_{\bbP}( 1)^k \longto 0.
  \end{equation*}
  Then the short exact sequences of vector bundles
  \begin{gather*}
    0 \longto \ker( p) \longto \calO_{\bbP}^{2n+2k} \longto[ p] \calO_{\bbP}( 1)^k \longto 0,\\
    0 \longto \calO_{\bbP}( -1)^k \longto[ i] \ker( p) \longto E \longto 0
  \end{gather*}
  show that $E$ has rank $2n$ and Chern polynomial $c_t( E) = (1 - t^2)^{-k}$.
  Now tensor these sequences with $\calO_{\bbP}( l)$, and consider the resulting long exact cohomology sequences.
  For $l = 0$, we get an exact sequence
  \begin{equation*}
    H^0( \ker(p) ) \longto H^0( E) \longto H^1( \calO_{\bbP}( -1)^k) = 0.
  \end{equation*}
  Here $H^0( \ker(p) ) = 0$, since $\ker( p)$ is stable by \cite[Theorem 2.8]{ancona-ottaviani}; hence $H^0( E) = 0$.
  For general $l$, we see that $H^q( E( l))$ vanishes whenever
  \begin{equation*}
    H^{q-1}( \calO_{\bbP}( l + 1)) = H^q( \calO_{\bbP}( l)) = H^{q+1}( \calO_{\bbP}( l - 1)) = 0.
  \end{equation*}
  The latter holds in each of the following four cases:
  \begin{itemize}
   \item $l = -2n-1$, and $q \neq 2n$
   \item $-2n \leq l \leq -2$, and $q$ arbitrary
   \item $l = -1$, and $q \neq 1$
   \item $l = 0$, and $q \geq 2$
  \end{itemize}
  It follows that $E( l)$ has natural cohomology for $-2n-1 \leq l \leq 0$, the only possibly nonzero cohomology groups in this range being
  \begin{equation*}
    H^{2n}( E( -2n-1)), \quad H^1( E( -1)), \quad\text{and}\quad H^1( E).
  \end{equation*}

  ii) Let $E^{\bullet}$ and $F^{\bullet}$ more generally be two monads, with cohomologies $E$ and $F$. Given a morphism $E \to F$, we try to lift it
  to a morphism of monads $E^{\bullet} \to F^{\bullet}$. As explained in \cite[Lemma II.4.1.3]{okonek-schneider-spindler}, the obstructions
  against the existence and uniqueness of such a lift are some classes in
  \begin{equation*}
    \Ext^q( E^i, F^j) = H^q( \bbP, (E^i)^{\dual} \otimes F^j)
  \end{equation*}
  with $i > j$ and $q \leq 2$. If $E^{\bullet}$ and $F^{\bullet}$ are $k$-instanton monads, then
  \begin{equation*}
    \Ext^q( E^i, F^j) = H^q( \bbP, \calO_{\bbP}( j - i)^{\rank( E_i) \cdot \rank( E_j)})
  \end{equation*}
  vanishes for all $i > j$ and all $q \leq 2$.

  iii) For any vector bundle $E$ over $\bbP$, Beilinson \cite{beilinson} has constructed a spectral sequence with $E_1$-term
  \begin{equation*}
    E_1^{pq} = H^q( \bbP, E( p)) \otimes \Omega_{\bbP}^{-p}( -p)
  \end{equation*}
  which converges to $E$. If $E$ is a $k$-instanton bundle, then most of these terms vanish, and the claim follows;
  for some more details, see for example \cite[Lemma 1.3 and Corollary 1.4]{okonek-spindler}\footnote{In \cite{okonek-spindler},
  all instanton bundles are assumed to be symplectic. However, this assumption is not used in the quoted Lemma 1.3 and Corollary 1.4.}.
\end{proof}
\begin{rem}
  Let $S$ be a scheme of finite type over $\bbC$.

  A vector bundle $\calE$ over $\bbP \times S$ is a \emph{family of $k$-instanton bundles}
  if its restriction to every closed point $s \in S( \bbC)$ is a $k$-instanton bundle.

  A monad $\calE^{\bullet}$ of vector bundles over $\bbP \times S$ is a \emph{family of $k$-instanton monads}
  if its restriction to every closed point $s \in S( \bbC)$ is a $k$-instanton monad.
  (This implies $\calE^d \cong \calO_{\bbP}( d) \boxtimes F^d$ for vector bundles $F^d$ over $S$.)

  With these definitions, the above equivalence between instanton bundles and instanton monads extends to families; cf. \cite[p. 585f.]{spindler-trautmann}.
\end{rem}

\section{Moduli of special instantons} \label{sec:moduli}

This section recalls the notion of \emph{special} instantons due to Spindler-Trautmann \cite{spindler-trautmann},
and their construction of moduli spaces for these.

Still assuming $k \geq 1$, we fix two complex vector spaces $V, W \cong \bbC^2$.
Let $V^{\otimes r} \twoheadrightarrow S^r V$ denote the $r$th symmetric power of $V$. Then
\begin{align*}
  E^0_{n, k} & := (S^{n+k} V \otimes W)^{\dual} \otimes \calO_{\bbP} \qquad\text{and}\\
  E^1_{n, k} & := (S^k V)^{\dual} \otimes \calO_{\bbP}( 1)
\end{align*}
are vector bundles over $\bbP = \bbP^{2n+1}$, with ranks $2n+2(k+1)$ and $k+1$.

The multiplication $\mu_r: S^r V \otimes S^n V \to S^{n+r} V$ induces a linear map
\begin{equation*}
  \mu_r^*: (S^{n+r} V)^{\dual} \longto (S^r V)^{\dual} \otimes (S^n V)^{\dual}.
\end{equation*}
The choice of a linear isomorphism
\begin{equation*}
  b: (S^n V \otimes W)^{\dual} \longto[ \sim] H^0( \bbP, \calO_{\bbP}( 1))
\end{equation*}
thus determines a morphism of vector bundles
\begin{equation*}
  p_b: E^0_{n, k} \longto E^1_{n, k}
\end{equation*}
such that $H^0( p_b)$ is the composition
\begin{equation*}
  H^0( E^0_{n, k}) \longto[ \mu_k^*] (S^k V)^{\dual} \otimes (S^n V \otimes W)^{\dual} \longto[ b] H^0( E^1_{n, k}).
\end{equation*}
Note that this composition is injective, since $\mu_k$ is surjective.
\begin{defn}
  \begin{itemize}
   \item[i)] A $(k+1)$-instanton monad $E^{\bullet} \in \Cpx( \bbP)$ is called \emph{special} if its stupid truncation
    \begin{equation*}
      \tau^{\geq 0}( E^{\bullet}) := [ E^0 \longto[ p] E^1 ]
    \end{equation*}
    is in $\Cpx( \bbP)$ isomorphic to a complex of the form
    \begin{equation*}
      [ E^0_{n, k} \longto[ p_b] E^1_{n, k}]
    \end{equation*}
    for some isomorphism $b: (S^n V \otimes W)^{\dual} \to H^0( \bbP, \calO_{\bbP}( 1))$.
   \item[ii)] A $(k+1)$-instanton bundle $E$ over $\bbP$ is \emph{special} if $E$ is isomorphic to the cohomology of some special $(k+1)$-instanton monad.
  \end{itemize}
\end{defn}
\begin{rem}
  This definition is equivalent to the original definition given by Spindler and Trautmann, according to \cite[Proposition 4.2]{spindler-trautmann}.

  In particular, all these special instanton bundles are simple, and have trivial splitting type; cf. also \cite[Proposition 4.5]{spindler-trautmann}.
\end{rem}
With $k \geq 1$ still fixed, we consider all complexes of the form
\begin{equation*}
  [ E^0_{n, k} \longto[ p_b] E^1_{n, k}] \in \Cpx( \bbP)
\end{equation*}
as above. The next step is to classify them up to isomorphy in $\Cpx( \bbP)$.

We have an exact sequence of algebraic groups
\begin{equation*}
  1 \longto \Gm \longto[ \iota_n] \GL( V) \times \GL( W) \longto[ \pi_n] \GL( S^n V \otimes W),
\end{equation*}
defined by $\iota_n( \lambda) := ( \lambda \id_V, \lambda^{-n} \id_W)$ and $\pi_n( \alpha, \beta) := S^n \alpha \otimes \beta$.
In particular, $\pi_n$ allows us to identify the $7$-dimensional group
\begin{equation*}
  G_n := \GL( V) \times \GL( W)/\iota_n( \Gm)
\end{equation*}
with a closed subgroup of $\GL( S^n V \otimes W)$.
\begin{prop} \label{prop:X_n}
  Let $k \geq 1$ be given. The homogeneous variety
  \begin{equation*}
    X_n := G_n \backslash \GL( S^n V \otimes W)
  \end{equation*}
  is a coarse moduli space for complexes of the form $[ E^0_{n, k} \longto[ p_b] E^1_{n, k}]$.
\end{prop}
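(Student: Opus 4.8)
First I would unwind what must be shown. A complex of the form $[E^0_{n,k} \longto[p_b] E^1_{n,k}]$ is determined by the isomorphism $b:(S^nV\otimes W)^{\dual}\to H^0(\bbP,\calO_\bbP(1))$; two such complexes are isomorphic in $\Cpx(\bbP)$ precisely when there is a commuting square of bundle automorphisms of $E^0_{n,k}$ and $E^1_{n,k}$. So the plan is: (1) identify the set of all admissible $b$ with the group $\GL(S^nV\otimes W)$ (after fixing one reference isomorphism $b_0$); (2) identify $\Aut(E^0_{n,k})$, $\Aut(E^1_{n,k})$ and the isomorphism relation between the complexes $[p_b]$ and $[p_{b'}]$ with an action on $\GL(S^nV\otimes W)$ whose orbits are exactly the left cosets of $G_n$; (3) deduce that the set of isomorphism classes is in natural bijection with $X_n := G_n\backslash\GL(S^nV\otimes W)$, and finally check that this bijection has the universal property of a coarse moduli space, i.e. is functorial in families and initial among schemes receiving the classifying map.

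For step (2), the key computation is that $\Aut(E^0_{n,k}) = \GL(S^{n+k}V\otimes W)$ and $\Aut(E^1_{n,k}) = \GL(S^kV)$, since $E^i_{n,k}$ is a trivial bundle (up to twist) tensored with a fixed vector space, so $\Hom_\bbP(E^i_{n,k},E^i_{n,k}) = \End(\text{that vector space})$. An automorphism of the complex $[p_b]$ is then a pair $(A,C)\in\GL(S^{n+k}V\otimes W)\times\GL(S^kV)$ with $C\circ p_b = p_{b}\circ A$; an \emph{isomorphism} $[p_b]\to[p_{b'}]$ is such a pair with $C\circ p_b = p_{b'}\circ A$. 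Tracing through the definition of $p_b$ via $H^0$, the condition $H^0(p_b) = (b\otimes\id)\circ\mu_k^*$ and its naturality in the multiplication map $\mu_k$, I expect the relation to become: $b' = (S^k\alpha\otimes\beta)\circ b\circ (\text{something})$ for $(\alpha,\beta)\in\GL(V)\times\GL(W)$, i.e. changing $b$ by the left action of $\pi_n$-type elements and by the right regular action of $\GL(S^nV\otimes W)$ on itself. Concretely, writing $b = b_0\circ g$ with $g\in\GL(S^nV\otimes W)$, the isomorphism classes should be $G_n\backslash\GL(S^nV\otimes W)$ where $G_n$ acts on the left via the embedding $\pi_n$. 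The map $\pi_n$ being injective modulo $\iota_n(\Gm)$ is exactly what makes the orbits the $G_n$-cosets, with no further collapsing.

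The main obstacle I anticipate is twofold. First, the bookkeeping in step (2): carefully matching the automorphism $(A,C)$ of the $H^0$-level linear-algebra datum $\mu_k^*$ with a well-defined element of $G_n$ — in particular showing that \emph{every} pair $(A,C)$ making the square commute arises from $(\alpha,\beta)$, which requires knowing that $\mu_k: S^kV\otimes S^nV\to S^{n+k}V$ together with the representation theory of $\GL_2$ rigidifies the situation (Schur's lemma / the fact that $S^kV\otimes S^nV$ has $S^{n+k}V$ as a canonical quotient with multiplicity one). Second, the coarse moduli property itself: I would need the family version of Proposition \ref{prop:equivalence} (quoted in the remark after it) to see that a family of such complexes over $S$ is the same as a $\GL(S^nV\otimes W)$-torsor-type datum over $S$, hence yields a morphism $S\to X_n$; and I would need to argue that $X_n$ is initial for this, which on the level of $\bbC$-points is just the orbit bijection above, and on the scheme level follows because $G_n$ acts freely (as $\pi_n$ is injective) so the quotient $G_n\backslash\GL(S^nV\otimes W)$ is a genuine scheme (even a homogeneous space), and the classifying morphism is characterized by where it sends points. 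The word "coarse" (rather than "fine") is presumably forced by the fact that the $7$-dimensional stabilizer data — the automorphisms of the complex that do \emph{not} come from $G_n$ but rather fix $b$, such as scalars — obstruct a universal family; I would note this but not belabor it, since only the coarse statement is claimed.
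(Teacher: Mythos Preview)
Your plan is correct and follows essentially the same route as the paper: parameterize the complexes by $b\in\GL(S^nV\otimes W)$ via a fixed reference $b_0$, show that isomorphism of complexes corresponds exactly to the left $G_n$-action, and conclude that $X_n=G_n\backslash\GL(S^nV\otimes W)$ is the coarse moduli space. The only substantive difference is that for the converse direction---that every isomorphism $[p_b]\cong[p_{b'}]$ forces $b'=\pi_n(\alpha,\beta)\cdot b$ for some $(\alpha,\beta)\in\GL(V)\times\GL(W)$---the paper simply cites step~2) in the proof of \cite[Proposition~6.1]{spindler-trautmann}, whereas you propose to extract $(\alpha,\beta)$ directly from $(A,C)$ via the multiplicity-one decomposition of $S^kV\otimes S^nV$ as a $\GL(V)$-module; that is a sound strategy (and note your formula should read $S^n\alpha\otimes\beta$ acting on $b$, not $S^k\alpha\otimes\beta$).
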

\begin{proof}
  Given an isomorphism $b: (S^n V \otimes W)^{\dual} \to H^0( \bbP, \calO_{\bbP}( 1))$ and an element $g \in \GL( S^n V \otimes W)$, we get another isomorphism
  \begin{equation*}
    g \cdot b := b \circ g^{\dual}: (S^n V \otimes W)^{\dual} \longto[ \sim] H^0( \bbP, \calO_{\bbP}( 1)).
  \end{equation*}
  This defines a simply transitive action of $\GL( S^n V \otimes W)$ on the set of all such isomorphisms $b$.

  Every pair $(\alpha, \beta) \in \GL( V) \times \GL( W)$ yields a commutative diagram
  \begin{equation*} \xymatrix{
    (S^{n+k} V \otimes W)^{\dual} \ar[rr]^-{b \circ \mu_k^*} \ar[d]_{(S^{n+k} \alpha^{-1} \otimes \beta^{-1})^{\dual}}
      && (S^k V)^{\dual} \otimes H^0( \bbP, \calO_{\bbP}( 1)) \ar[d]^{(S^k \alpha^{-1})^{\dual}}\\
    (S^{n+k} V \otimes W)^{\dual} \ar[rr]^-{b' \circ \mu_k^*}
      && (S^k V)^{\dual} \otimes H^0( \bbP, \calO_{\bbP}( 1))
  } \end{equation*}
  with $b' := \pi_n( \alpha, \beta) \cdot b$, and hence an isomorphism of complexes
  \begin{equation*} \xymatrix{
    E^0_{n, k} \ar[rr]^{p_b} \ar[d] && E^1_{n, k} \ar[d]\\
    E^0_{n, k} \ar[rr]^{p_{b'}} && E^1_{n, k}.
  } \end{equation*}

  Conversely, suppose that $[ E^0_{n, k} \longto[ p_b] E^1_{n, k}]$ and $[ E^0_{n, k} \longto[ p_{b'}] E^1_{n, k}]$ are isomorphic in $\Cpx( \bbP)$
  for some isomorphisms $b$ and $b'$. Then there is a pair $(\alpha, \beta) \in \GL( V) \times \GL( W)$
  with $b' = \pi_n( \alpha, \beta) \cdot b$, according to step 2) in the proof of \cite[Proposition 6.1]{spindler-trautmann}.

  Pick one isomorphism $b_0: (S^n V \otimes W)^{\dual} \to H^0( \bbP, \calO_{\bbP}( 1))$.
  Assign to each complex $p_{g \cdot b_0}: [ E^0_{n, k} \to E^1_{n, k}]$ with $g \in \GL( S^n V \otimes W)$
  the moduli point $G_n g$ in the coset space $X_n = G_n \backslash \GL( S^n V \otimes W)$.
  The coset $G_n g$ depends only on the isomorphism class of the complex. This turns $X_n$ into a coarse moduli space for the complexes in question.
\end{proof}
The group $G_n$ acts via its quotient $\PGL( V)$ on $\bbP V := (V \setminus \{0\})/\Gm$. Thus the homogeneous variety $X_n$ carries a natural conic bundle
\begin{equation*}
  C := \bbP V \times^{G_n} \GL( S^n V \times W) \longto X_n
\end{equation*}
with fibers $\bbP V$. The fiberwise symmetric power
\begin{equation*}
  C^{(2n+k)} \longto X_n
\end{equation*}
is a projective bundle with fibers
\begin{equation*}
  (\bbP V)^{(2n+k)} = \bbP H^0( \bbP V, \calO( 2n+k)) = \bbP( S^{2n+k} V)^{\dual}.
\end{equation*}
Note that every element $f \in (S^{2n+k} V)^{\dual}$ induces a linear map 
\begin{equation*}
  \mu_{n+k}^*( f): S^n V \longto (S^{n+k} V)^{\dual}.
\end{equation*}
We form the associated Grassmannian bundle
\begin{equation*}
  \Grass_k( C^{(2n+k)}) \longto X_n,
\end{equation*}
which parameterizes linear subspaces $\bbP U \subseteq (\bbP V)^{(2n+k)}$ of dimension $k$,
or equivalently linear subspaces $U \subseteq ( S^{2n+k} V)^{\dual}$ of dimension $k+1$.
\begin{thm}[Spindler-Trautmann] \label{thm:mudulispace}
  Given $k \geq 1$, let
  \begin{equation*}
    M_{\bbP}( k + 1) \subseteq \Grass_k( C^{(2n+k)}) \longto X_n
  \end{equation*}
  denote the open locus of all linear subspaces $U \subseteq ( S^{2n+k} V)^{\dual}$ such that $\mu_{n+k}^*( f)$ is injective for all $0 \neq f \in U$.
  Then $M_{\bbP}( k + 1)$ is a coarse moduli space for special $(k+1)$-instanton bundles over $\bbP = \bbP^{2n+1}$.
\end{thm}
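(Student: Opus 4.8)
The plan is to combine the monad description of Section~\ref{sec:instantons} with Proposition~\ref{prop:X_n}. By the equivalence in Proposition~\ref{prop:equivalence} a special $(k+1)$-instanton bundle is the cohomology of a special $(k+1)$-instanton monad $E^{\bullet} = [E^{-1} \to E^0 \to E^1]$, and such a monad is the same as its stupid truncation $T := \tau^{\geq 0}(E^{\bullet})$ — an object classified up to isomorphism by $X_n$ — together with the subbundle $\image(i) \subseteq \ker(p)$ whose quotient is the bundle. The goal is to show that, once $T$ is fixed, this subbundle, taken up to isomorphism, runs exactly over the fibre of $M_{\bbP}(k+1) \to X_n$.

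First I would reduce — using Proposition~\ref{prop:equivalence} and its family version quoted in the subsequent remark — to classifying special $(k+1)$-instanton monads up to isomorphism, sending $E^{\bullet}$ to the isomorphism class of $T = \tau^{\geq 0}(E^{\bullet}) \cong [E^0_{n,k} \longto[p_b] E^1_{n,k}]$, which by Proposition~\ref{prop:X_n} is the prospective point of $X_n$. Fix such a $b$ and put $K := \ker(p_b)$, a vector bundle of rank $2n+k+1$. A special $(k+1)$-instanton monad with truncation $[E^0_{n,k} \longto[p_b] E^1_{n,k}]$ is then precisely a subbundle inclusion $\calO_{\bbP}(-1)^{k+1} \hookrightarrow K$ — equivalently a $(k+1)$-dimensional subspace $U \subseteq H^0(\bbP, K(1))$ for which the associated bundle map $U \otimes \calO_{\bbP}(-1) \to E^0_{n,k}$ is injective on every fibre — the cohomology being automatically a special $(k+1)$-instanton bundle by Proposition~\ref{prop:equivalence}(i).

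Two statements about $T$ carry the argument. (a) $\End_{\Cpx(\bbP)}(T) = \bbC$: an endomorphism $(a_0, a_1)$ satisfies $a_1 \circ H^0(p_b) = H^0(p_b) \circ a_0$, and since $H^0(p_b) = b \circ \mu_k^*$ is injective, $a_0$ is determined by $a_1$ and the condition reduces to $(a_1 \otimes \id)$ preserving $\image(\mu_k^*) = \ker(\mu_k)^{\perp}$, i.e.\ to $a_1^{\dual} \otimes \id_{S^n V}$ preserving $\ker(\mu_k) \subseteq S^k V \otimes S^n V$; chasing the standard relations in $\ker(\mu_k)$, such as $e_0^{k} \otimes e_0^{a} e_1^{n-a} - e_0^{k-1} e_1 \otimes e_0^{a+1} e_1^{n-a-1}$, then forces $a_1^{\dual}$, hence $a_1$, and then $a_0$, to be a scalar. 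So $\Aut_{\Cpx(\bbP)}(T) = \Gm$, acting on $H^0(\bbP, K(1))$ merely by scalars. (b) There is an isomorphism $H^0(\bbP, K(1)) \cong (S^{2n+k}V)^{\dual}$, canonical once $b$ is chosen: twisting $0 \to K \to E^0_{n,k} \longto[p_b] E^1_{n,k} \to 0$ by $\calO_{\bbP}(1)$ presents $H^0(\bbP, K(1))$ as the kernel of the map $(S^{n+k}V \otimes W)^{\dual} \otimes (S^n V \otimes W)^{\dual} \to (S^k V)^{\dual} \otimes S^2\big((S^n V \otimes W)^{\dual}\big)$ induced by $\mu_k^*$ and multiplication of sections, and a Clebsch--Gordan computation — using $\Lambda^2 W \cong \bbC$ to absorb the two $W$-factors — identifies this kernel with the image of $(S^{2n+k}V)^{\dual}$ under $f \mapsto \mu_{n+k}^*(f) \otimes (\text{generator of } \Lambda^2 W^{\dual})$. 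I expect (b) to be the main obstacle: the cohomology sequence just used is not right exact in general, so this is a genuine representation-theoretic computation rather than a numerical one, and it is the place where I would rely on \cite{spindler-trautmann}.

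Granting (a) and (b), fibrewise injectivity of $U \otimes \calO_{\bbP}(-1) \to E^0_{n,k}$ — which, after evaluating at the points $[s \otimes w]$ ($0 \neq s \in S^n V$, $0 \neq w \in W$) and arguing as for the surjectivity of $p_b$, amounts to the injectivity of $\mu_{n+k}^*(f)$ for every nonzero $f \in U$ — is exactly the condition defining $M_{\bbP}(k+1)$. By (a) the ambiguity $\Gm = \Aut_{\Cpx(\bbP)}(T)$ acts trivially on the Grassmannian, so the isomorphism classes of special $(k+1)$-instanton monads whose truncation lies over a fixed $x \in X_n$ correspond bijectively to the $(k+1)$-dimensional $U \subseteq (S^{2n+k}V)^{\dual}$ with $\mu_{n+k}^*(f)$ injective for all $0 \neq f \in U$ — that is, to the fibre of $M_{\bbP}(k+1) \to X_n$ over $x$. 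Letting $x$ vary, the isomorphism of (b) depends on $b$ only through the $\PGL(V)$-action on $(S^{2n+k}V)^{\dual} = H^0(\bbP V, \calO(2n+k))$, which is precisely the twisting built into $C$ and its symmetric power, so the fibres glue into the open subscheme $M_{\bbP}(k+1) \subseteq \Grass_k(C^{(2n+k)})$. Finally, all of this is natural in a base scheme (family versions of Propositions~\ref{prop:equivalence} and \ref{prop:X_n}), so a family of special $(k+1)$-instanton bundles over $S$ yields a canonical morphism $S \to M_{\bbP}(k+1)$; since the special instanton bundles are simple, the resulting bijection on $\bbC$-points upgrades to the structure of a coarse moduli space. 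The two points that still need care are that these fibrewise identifications really descend to the global conic-bundle twist, and that one lands in a coarse — not a fine — moduli space.
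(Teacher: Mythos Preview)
Your proposal is correct and follows essentially the same approach as the paper: reduce to monads via Proposition~\ref{prop:equivalence}, classify the truncation by $X_n$ via Proposition~\ref{prop:X_n}, identify $\Hom_{\bbP}(\calO_{\bbP}(-1),\ker p_b)$ with $(S^{2n+k}V\otimes\Lambda^2 W)^{\dual}$ (your step~(b), the paper's kernel computation, both deferring the details to \cite{spindler-trautmann}), and translate the subbundle condition into injectivity of $\mu_{n+k}^*(f)$. Your explicit treatment of $\End_{\Cpx(\bbP)}(T)=\bbC$ in step~(a) is a point the paper leaves implicit in its appeal to \cite[Proposition~6.1]{spindler-trautmann}, but otherwise the architecture is the same.
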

\begin{proof}
  This statement is Theorem 6.3 in \cite{spindler-trautmann}. For the convenience of the reader, we give an outline of the proof.

  The starting point is that every special $(k+1)$-instanton bundle defines a special $(k+1)$-instanton monad, and hence by truncation a point in $X_n$.
  The remaining part of the monad will then be parameterized by the fiber of $M_{\bbP}( k + 1)$ over this point in $X_n$. 

  To be more specific, note that the choice of an isomorphism
  \begin{equation*}
    b: (S^n V \otimes W)^{\dual} \longto[ \sim] H^0( \bbP, \calO_{\bbP}( 1))
  \end{equation*}
  yields a commutative diagram
  \begin{equation*} \xymatrix{
    (S^{n+k} V \otimes W \otimes S^n V \otimes W)^{\dual} \ar[r]^-{\mu_k^*} \ar[d]_b & (S^k V)^{\dual} \otimes S^2 \big((S^n V \otimes W)^{\dual}\big) \ar[d]^{S^2 b}\\
    \Hom_{\bbP} \big( \calO_{\bbP}( -1), E_{n, k}^0 \big) \ar[r]^{(p_b)_*} & \Hom_{\bbP} \big( \calO_{\bbP}( -1), E_{n, k}^1 \big).
  } \end{equation*}
  The kernel of the horizontal map $\mu_k^*$ consists of all multilinear forms
  \begin{equation*}
    \varphi: S^{n+k} V \otimes W \otimes S^n V \otimes W \longto \bbC
  \end{equation*}
  which satisfy the condition
  \begin{align*}
      - & \varphi( v_1,  \ldots, v_n,  v_{n+1}, \ldots, v_{n+k}, w,  v_1', \ldots, v_n', w')\\
      = & \varphi( v_1', \ldots, v_n', v_{n+1}, \ldots, v_{n+k}, w', v_1,  \ldots, v_n,  w)
  \end{align*}
  for all $v_i, v_j' \in V$ and $w, w' \in W$. Using that $\varphi$ is also symmetric in $v_1, \ldots, v_{n+k}$ and in $v_1',  \ldots, v_n'$ separately,
  it is easy to deduce that $\varphi$ is symmetric in all the $v$'s and alternating in the $w$'s. This proves
  \begin{equation*}
    \ker( \mu_k^*) = ( S^{2n+k} V \otimes \Lambda^2 W)^{\dual}.
  \end{equation*}
  Hence $b$ induces, via the above diagram, an isomorphism
  \begin{equation*}
    ( S^{2n+k} V \otimes \Lambda^2 W)^{\dual} \longto[ \sim] \Hom_{\bbP} \big( \calO_{\bbP}( -1), \ker( p_b) \big);
  \end{equation*}
  cf. also \cite[Proposition 3.2]{spindler-trautmann}. Given a special $(k+1)$-instanton monad
  \begin{equation*}
    E^{\bullet} = [0 \longto \calO_{\bbP}( -1)^{(k+1)} \longto[ i] E_{n, k}^0 \longto[ p_b] E_{n, k}^1 \longto 0] \in \Cpx( \bbP),
  \end{equation*}
  the components $\calO_{\bbP}( -1) \to \ker( p_b)$ of $i$ thus span a linear subspace of $( S^{2n+k} V \oplus \Lambda^2 W)^{\dual}$,
  which corresponds to a linear subspace
  \begin{equation*}
    U \subseteq ( S^{2n+k} V)^{\dual}.
  \end{equation*}
  Since $i$ is an isomorphism onto a subbundle, $U$ has dimension $k+1$, and $\mu_{n+k}^*( f)$ is injective for all $0 \neq f \in U$;
  cf. \cite[(3.7)]{spindler-trautmann}. In this way, the special $(k+1)$-instanton monad $E^{\bullet}$ defines a point in $M_{\bbP}( k + 1)$.

  Conversely, every linear subspace $U \subseteq ( S^{2n+k} V)^{\dual}$ of dimension $k+1$ yields, by means of the above isomorphism, a linear subspace
  \begin{equation*}
    (\Lambda^2 W)^{\dual} \otimes U \subseteq \Hom_{\bbP} \big( \calO( -1), \ker( p_b) \big)
  \end{equation*}
  of dimension $k+1$, and hence a morphism of vector bundles
  \begin{equation*}
    i_U: \calO_{\bbP}( -1)^{(k+1)} \cong (\Lambda^2 W)^{\dual} \otimes U \otimes \calO_{\bbP}( -1) \longto \ker( p_b).
  \end{equation*}
  If $\mu_{n+k}^*( f)$ is injective for all $0 \neq f \in U$, then $i_U$ is an isomorphism onto a subbundle of $\ker( p_b)$, so
  \begin{equation*}
    0 \longto \calO_{\bbP}( -1)^{(k+1)} \longto[ i_U] E_{n, k}^0 \longto[ p_b] E_{n, k}^1 \longto 0
  \end{equation*}
  is a special $(k+1)$-instanton monad. This shows that $M_{\bbP}( k + 1)$ is a coarse moduli scheme for special $(k+1)$-instanton monads,
  and hence also for special $(k+1)$-instanton bundles due to Proposition \ref{prop:equivalence}.
\end{proof}
\begin{rem} \label{rem:families}
  To make this precise using the formalism of moduli functors, one would have to define what a \emph{family} of special $(k+1)$-instanton bundles is,
  say parameterized by a scheme $S$ of finite type over $\bbC$.

  A family of isomorphisms $b: (S^n V \otimes W)^{\dual} \to H^0( \bbP, \calO_{\bbP}( 1))$ is an isomorphism
  of the corresponding trivial vector bundles over $S$; it induces as before a complex of vector bundles
  \begin{equation*}
    [E^0_{n, k} \boxtimes \calO_S \longto E^1_{n, k} \boxtimes \calO_S]
  \end{equation*}
  over $\bbP \times S$. One could define that a family $\calE^{\bullet}$ of $(k+1)$-instanton monads
  is special if its truncation $\tau^{\geq 0}( \calE^{\bullet})$ is \'{e}tale-locally in $S$ isomorphic to a complex of this form,
  and that a family of $(k+1)$-instanton bundles $\calE$ is special if the corresponding family of $(k+1)$-instanton monads is.
  Then the arguments in the above proof of Theorem \ref{thm:mudulispace} extend routinely to families of special instantons.

  Note however that the moduli functor in \cite[p. 585]{spindler-trautmann} is slightly different from this one for non-reduced $S$,
  since they only require that the restriction of $\calE$ to all closed points $s \in S( \bbC)$ is special.
\end{rem}
\begin{rem}
  The moduli space $M_{\bbP}( k+1)$ is non-empty by \cite[3.7]{spindler-trautmann}. It is by construction an irreducible smooth variety of dimension
  \begin{equation*}
    \dim M_{\bbP}( k+1) = 2n(k+1) + \dim X_n = 2n(k+1) + (2n+2)^2 - 7.
  \end{equation*}
\end{rem}

\section{Rationality} \label{sec:rationality}
Let $G$ be a linear algebraic group over $\bbC$. Suppose that $G$ acts on an integral algebraic variety $X$ of finite type over $\bbC$.
We denote by $\bbC( X)^G$ the field of $G$-invariant rational functions on $X$.
\begin{lemma} \label{lemma:slice}
  Suppose that $G$ acts on the integral variety $X'$ over $\bbC$ as well, and that there is an open orbit $Gx' \subseteq X'$ with $x' \in X'( \bbC)$. Then
  \begin{equation*}
    \bbC( X \times X')^G \cong \bbC( X)^{\Stab_G( x')}.
  \end{equation*}
\end{lemma}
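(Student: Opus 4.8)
The plan is to discard everything outside the open orbit $Gx'\subseteq X'$ and then to ``untwist'' the diagonal $G$-action on what remains; throughout write $H:=\Stab_G(x')$. Since $X'$ is integral and $Gx'$ is open in it, $X\times Gx'$ is a dense open $G$-stable subvariety of the integral variety $X\times X'$, so restriction of rational functions is a $G$-equivariant isomorphism $\bbC(X\times X')\cong\bbC(X\times Gx')$, and it is enough to produce an isomorphism $\bbC(X\times Gx')^G\cong\bbC(X)^H$.

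The heart of the matter is to realise $X\times Gx'$ as a quotient of $X\times G$. The orbit map $\mu\colon G\to Gx'$, $g\mapsto gx'$, is faithfully flat — over $\bbC$ it is, up to the isomorphism $G/H\longto[\sim]Gx'$ valid in characteristic zero, the usual quotient map $G\to G/H$ — and its fibres are exactly the left cosets $gH$. Equip $X\times G$ with the diagonal $G$-action $g_0\cdot(x,g):=(g_0x,g_0g)$ and with the commuting, free, right $H$-action $h\cdot(x,g):=(x,gh^{-1})$. Then $\id_X\times\mu\colon X\times G\to X\times Gx'$ is $G$-equivariant and has the $H$-orbits as fibres, so by faithfully flat descent it identifies $\bbC(X\times Gx')$ with $\bbC(X\times G)^H$; taking $G$-invariants on both sides, and using that the two actions commute, gives
\begin{equation*}
  \bbC(X\times Gx')^G\;\cong\;\bigl(\bbC(X\times G)^H\bigr)^G.
\end{equation*}

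It remains to untwist. The automorphism $\Psi\colon X\times G\to X\times G$, $(x,g)\mapsto(g^{-1}x,g)$, carries the diagonal $G$-action to the action $g_0\cdot(y,g)=(y,g_0g)$ by left translation on $G$ alone, and carries the right $H$-action to $h\cdot(y,g)=(hy,gh^{-1})$. The left-translation action of $G$ on $X\times G$ is free, with geometric quotient the first projection $X\times G\to X$, so $\bbC(X\times G)^G\cong\bbC(X)$; and the formula $h\cdot(y,g)=(hy,gh^{-1})$ shows that the $H$-action induced on this copy of $X$ is nothing but the original one, $y\mapsto hy$. Hence $\bigl(\bbC(X\times G)^H\bigr)^G\cong\bbC(X)^H$, and composing the three isomorphisms yields $\bbC(X\times X')^G\cong\bbC(X)^{\Stab_G(x')}$. (Conceptually these steps only express the standard birational identification of $X\times Gx'$ with the $G$-variety associated to the $H$-torsor $\mu$ and the $H$-variety $X$.)

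I expect the only genuinely non-formal point to be the middle step: verifying that $\id_X\times\mu$ has precisely the $H$-orbits as fibres and that $H$-invariant rational functions therefore descend. This rests on faithfully flat descent together with the transitivity of $G$ on $Gx'$, which keeps the relevant indeterminacy loci away from the generic fibre; both are standard over $\bbC$. The untwisting step and the computation of $\bbC(X\times G)^G$ are then purely formal. (If $G$ happened to be disconnected, one would run the same argument on an irreducible component of $X\times G$ dominating $X$.)
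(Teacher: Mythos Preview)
Your argument is correct and is the standard proof of the slice lemma. The paper does not actually prove this statement: it simply cites Theorem~3.1 of Colliot-Th\'el\`ene--Sansuc, calling it the ``lemma of Seshadri'' or ``slice lemma''. So there is no comparison to make --- you have written out a self-contained proof where the paper defers to the literature, and the technical points you flag (descent along the $H$-torsor $G\to Gx'$ and the adjustment for disconnected $G$) are indeed the only non-formal ones.
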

\begin{proof}
  This is a special case of the standard `lemma of Seshadri', or `slice lemma'; cf. for example \cite[Theorem 3.1]{colliotthelene-sansuc}.
\end{proof}

The action of $G$ on $X$ is called \emph{almost free} if there is a dense open subvariety $X^0 \subseteq X$ such that
the stabiliser subgroup $\Stab_G( x) \subseteq G$ is trivial for each closed point $x \in X^0( \bbC)$.
\begin{lemma} \label{lemma:free}
  Suppose $V \cong \bbC^2$, and $n \geq 2$. The natural action of $\PGL( V)$ on the Grassmannian $\Grass_2( S^n V \oplus S^n V)$ is almost free.
\end{lemma}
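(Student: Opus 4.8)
The plan is to exhibit a single point $x \in \Grass_2(S^n V \oplus S^n V)$ whose stabiliser in $\PGL(V)$ is trivial, and then conclude by the following general principle: the locus of points with nontrivial stabiliser is closed (stabilisers can only jump up under specialisation, since the condition ``$g$ fixes $x$'' is closed in $\PGL(V) \times \Grass_2$, and $\PGL(V)$ is of finite type), so its complement is the desired dense open $X^0$. Thus the whole argument reduces to producing one point with trivial stabiliser. Since $\dim \PGL(V) = 3$, this is plausible as soon as $\dim \Grass_2(S^n V \oplus S^n V) = 2(n+1) \cdot 2 - 4 = 4n$ is at least $3$, i.e. for all $n \geq 1$; the hypothesis $n \geq 2$ is presumably needed because the explicit point I choose breaks down, or the generic stabiliser is actually nontrivial, at $n=1$.

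First I would fix a basis $e_0, e_1$ of $V$, so that $S^n V$ has basis $e_0^n, e_0^{n-1}e_1, \ldots, e_1^n$, on which the diagonal torus of $\PGL(V)$ acts with distinct weights $n, n-2, \ldots, -n$. I would then choose the $2$-plane $U \subseteq S^n V \oplus S^n V$ to be spanned by the two vectors $u_1 := (e_0^n,\, e_1^n)$ and $u_2 := (e_1^n,\, e_0^n)$ — or a similar pair mixing the two copies so that no nonzero torus weight space is respected — and compute its stabiliser. An element $g \in \GL(V)$ with $S^n g \oplus S^n g$ preserving $U$ must send $u_1, u_2$ into $U$; writing out these conditions in coordinates gives polynomial equations on the entries of $g$. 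Using that $S^n g$ acts, the key leverage is that $S^n g(e_0^n) = (g\cdot e_0)^{\otimes n}$ is an $n$-th power, and an element of $U$ of the form $(s, t)$ with $s$ an $n$-th power forces the coefficients to line up only for $g$ diagonal or anti-diagonal; then the torus weights being distinct (for $n \geq 2$, so that $n \neq -n$ and the weights genuinely separate) pins $g$ down to a scalar, i.e. trivial in $\PGL(V)$.

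The main obstacle is the explicit stabiliser computation: verifying that the chosen $2$-plane really has trivial $\PGL(V)$-stabiliser, and that there is no continuous family of symmetries I have overlooked. Concretely, one must rule out that some non-diagonalisable $g$ (a unipotent, or a rotation not normalising the chosen torus) preserves $U$; the cleanest way is to observe that $\PGL(V) = \PGL_2$ acting on $\bbP(S^n V) = \bbP^n$ (the rational normal curve picture) has the property that a proper subgroup fixing two ``generic'' points of $S^n V \oplus S^n V$ is finite, and then to check by direct substitution that even the finite ambiguity (the anti-diagonal involution and roots of unity) is killed by a suitable generic perturbation $u_1 + \epsilon(\text{extra terms})$. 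Since we only need one good point and openness does the rest, a slightly perturbed, ``sufficiently general'' choice of $U$ suffices, and one can afford to be non-canonical. I would present the diagonal-torus-plus-Weyl-involution analysis as the core, remark that a generic small perturbation removes the residual finite stabiliser, and invoke the closedness of the bad locus to finish.
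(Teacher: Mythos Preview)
Your approach --- exhibit one point with trivial stabiliser and invoke closedness of the non-free locus --- is different from the paper's, which instead bounds $\dim \Grass_2(S^nV\oplus S^nV)^\alpha$ case by case over all nontrivial $\alpha \in \PGL(V)$ (semisimple with generic eigenvalue ratio, semisimple with eigenvalue ratio a root of unity, unipotent) and shows that the union of all these fixed loci, taken over the whole $3$-dimensional group, has dimension at most $2n+2<4n$. Your route would be slicker if it worked, but as written it has two genuine gaps.

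First, the chosen $2$-plane $U=\langle(e_0^n,e_1^n),(e_1^n,e_0^n)\rangle$ does \emph{not} have trivial stabiliser: the diagonal element $\mathrm{diag}(1,\zeta)$ with $\zeta^n=1$ fixes each $u_i$, and the swap $e_0\leftrightarrow e_1$ interchanges $u_1$ and $u_2$, so the full dihedral group of order $2n$ in $\PGL(V)$ preserves $U$. You acknowledge this and retreat to ``a generic perturbation kills the residual finite stabiliser'', but that assertion is precisely the content of the lemma and you never prove it.

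Second, the claim that the non-free locus is closed is not justified by your argument. From closedness of $\{(g,x):gx=x\}$ in $\PGL(V)\times\Grass_2$ one gets, via Chevalley, only that its projection to $\Grass_2$ is \emph{constructible}, since $\PGL(V)\setminus\{e\}$ is not proper. What is upper semicontinuous under specialisation is the \emph{dimension} of the stabiliser, not its order; a single free point does not by itself force a dense open of free points. To pass from ``the generic stabiliser is finite'' to ``the generic stabiliser is trivial'' you must either invoke a principal-orbit-type theorem for reductive group actions (Richardson/Luna), which you do not, or show that for every nontrivial $\alpha$ the fixed locus has large enough codimension that the union over the $3$-parameter family of such $\alpha$ still misses the generic point --- and that is exactly the paper's dimension count.
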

\begin{proof}
  Suppose that $\alpha \in \GL( V)$ represents a nontrivial element in $\PGL( V)$.
  Up to multiplication by $\bbC^*$, and the choice of an appropriate basis for $V$, there are three cases:
  \begin{align*}
    1.) \quad \alpha & = \begin{pmatrix} 1 & 0\\0 & \lambda \end{pmatrix}, \quad \lambda \in \bbC^*, \quad \lambda^r \neq 1 \text{ for all } r\in \{1,\ldots,n\},\\
    2.) \quad \alpha & = \begin{pmatrix} 1 & 0\\0 & \zeta \end{pmatrix}, \quad \zeta \text{ a primitive $r$th root of unity}, \quad 2 \leq r \leq n,\\
    3.) \quad \alpha & = \begin{pmatrix} 1 & 1\\0 & 1 \end{pmatrix}.
  \end{align*}
  In each case, let us estimate the dimension of the fixed point set
  \begin{equation*}
    \Grass_2( S^n V \oplus S^n V)^{\alpha}.
  \end{equation*}

  In case 1.), $S^n V$ decomposes into $1$-dimensional eigenspaces under the semisimple endomorphism $\alpha$, so $S^n V \oplus S^n V$ decomposes into
  $2$-dimensional eigenspaces. Every $2$-dimensional $\alpha$-invariant subspace
  \begin{equation*}
    U \subseteq S^n V \oplus S^n V
  \end{equation*}
  is either one of these eigenspaces, or a direct sum of lines in two of them. The former are parameterized by a finite set, the latter by a finite union
  of products $\bbP^1 \times \bbP^1$. Hence we conclude in this case
  \begin{equation*}
    \dim \Grass_2( S^n V \oplus S^n V)^{\alpha} = 2.
  \end{equation*}
  The union of these fixed loci has dimension $\leq 2 + \dim \PGL( V) = 5$.

  In case 2.), $S^n V$ decomposes into $r$ eigenspaces $(S^n V)_{\chi}$ under the semisimple endomorphism $\alpha$. Their dimension is
  \begin{equation*}
    \dim (S^n V)_{\chi} \leq \lceil (n+1)/r \rceil \leq \lceil (n+1)/2 \rceil \leq (n+2)/2.
  \end{equation*}
  So the eigenspace $(S^n V)_{\chi}^2$ of $S^n V \oplus S^n V$ has dimension $\leq n+2$, and the space of $2$-dimensional subspaces $U \subseteq (S^n V)_{\chi}^2$
  has dimension
  \begin{equation*}
    \dim \Grass_2 \big( (S^n V)_{\chi}^2 \big) \leq 2 n.
  \end{equation*}
  If $\chi_1 \neq \chi_2$ are two eigenvalues of $\alpha$ on $S^n V$, then
  \begin{equation*}
    \dim (S^n V)_{\chi_1} + \dim (S^n V)_{\chi_2} \leq \dim S^n V = n+1.
  \end{equation*}
  So the space of direct sums $U=U_1 \oplus U_2$, where $U_i$ is a line in the eigenspace $(S^n V)_{\chi_i}^2$ of $\alpha$ on $S^n V \oplus S^n V$, has dimension
  \begin{equation*}
    \dim \bbP \big( (S^n V)_{\chi_1}^2 \big) \times \bbP \big( (S^n V)_{\chi_2}^2 \big) \leq 2 \dim( S^n V) - 2 = 2n.
  \end{equation*}
  These two arguments cover all $2$-dimensional $\alpha$-invariant subspaces $U \subseteq S^n V \oplus S^n V$. Hence we can conclude in this case
  \begin{equation*}
    \dim \Grass_2( S^n V \oplus S^n V)^{\alpha} \leq 2n.
  \end{equation*}
  Such $\alpha \in \GL( V)$ yield finitely many $2$-dimensional conjugacy classes in $\PGL( V)$.
  So the union of these fixed loci has dimension $\leq 2n + 2$.

  In case 3.), we can choose a basis $x, y \in V$ with
  \begin{equation*}
    \alpha( x) = x \qquad\text{and}\qquad \alpha( y) = x + y.
  \end{equation*}
  The endomorphism $(\alpha - \id)^r$ of $S^n V$ has kernel
  \begin{equation*}
    \ker (\alpha - \id)^r = x^{n-r+1} \cdot S^{r-1} V
  \end{equation*}
  for $1 \leq r \leq n+1$. In particular, the kernel of $(\alpha - \id)^r$ on $S^n V \oplus S^n V$ has dimension $2r$.
  Suppose that $U \subseteq S^n V \oplus S^n V$ is $2$-dimensional and $\alpha$-invariant. It follows that $\alpha - \id$ is a nilpotent endomorphism of $U$.

  If $\alpha - \id = 0$ on $U$, then $U$ coincides with the kernel $\bbC \cdot x^n \oplus \bbC \cdot x^n$ of $\alpha - \id$ on $S^n V \oplus S^n V$;
  hence $U$ is unique in this situation.

  Otherwise, we have $(\alpha - \id)^2 = 0$ on $U$, and $U = \bbC \cdot u \oplus \bbC \cdot \alpha( u)$ for any element $u \in U$ with $\alpha( u) \neq u$.
  Here $u$ can be any element in the $4$-dimensional kernel of $(\alpha - \id)^2$ on $S^n V \oplus S^n V$ with $\alpha( u) \neq u$.

  Hence we conclude in this case
  \begin{equation*}
    \dim \Grass_2( S^n V \oplus S^n V)^{\alpha} = 4 - 2 = 2.
  \end{equation*}
  This $\alpha \in \GL( V)$ yields one $2$-dimensional conjugacy class in $\PGL( V)$. So the union of these fixed loci has dimension $\leq 2 + 2 = 4$.

  The Grassmannian $\Grass_2( S^n V \oplus S^n V)$ has dimension $4n$. Using the assumption $n \geq 2$, we see that the closure of all these fixed points
  has smaller dimension. On the open complement, $\PGL( V)$ acts freely.
\end{proof}
We say that the field $\bbC( X)^G$ of $G$-invariant rational functions on $X$ is \emph{rational} if it is purely transcendental over the base field $\bbC$.
\begin{example}
  The group $\PGL( V)$ with $V \cong \bbC^2$ acts on the vector space $\End( V)^2$ over $\bbC$ by simultaneous conjugation.
  The action is known to be almost free, and the field of invariants
  \begin{equation} \label{eq:PGL_2}
    \bbC \big( \End( V)^2 \big)^{\PGL( V)}
  \end{equation}
  is rational. In fact, sending $(\alpha_1, \alpha_2) \in \End( V)^2$ to the traces of the five maps
  $\alpha_1, \alpha_2, \alpha_1^2, \alpha_1 \alpha_2, \alpha_2^2 \in \End( V)$ defines an isomorphism
  \begin{equation*}
    \End( V)^2 /\!/ \PGL( V) \longto[ \sim] \bbA^5.
  \end{equation*}
\end{example}
\begin{lemma}[No-name lemma] \label{lemma:no-name}
  Let $G$ act linearly on vector spaces $M$ and $M'$ of finite dimension over $\bbC$.
  If $G$ is reductive, and acts almost freely on $M$, then $\bbC( M \oplus M')^G$ is purely transcendental over $\bbC( M)^G$.
\end{lemma}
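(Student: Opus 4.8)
The plan is to regard $M\oplus M'$ as the total space of the $G$-equivariant trivial vector bundle $M\times M'\to M$, with $G$ acting linearly on the fibre $M'$, and to push this bundle down to a birational model of the quotient of $M$. Over a suitable open set the quotient map for $M$ is a $G$-torsor; equivariant vector bundles descend along such a torsor to ordinary vector bundles; and ordinary vector bundles are Zariski-locally trivial. Stacking these three facts forces the function field ``upstairs'' to be purely transcendental over the one ``downstairs''.

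Concretely, I first choose a dense $G$-stable open subvariety $U\subseteq M$ on which $G$ acts with trivial stabilizers (possible by almost-freeness of the action) and such that a geometric quotient $q\colon U\to Y:=U/G$ exists with $q$ a $G$-torsor; here reductivity of $G$ is used, together with the fact that a free action admitting a geometric quotient is fppf-locally trivial, so that the action map $G\times U\longto[\sim] U\times_Y U$ is an isomorphism. Since $U$ is dense in $M$, one has $\bbC(M)^G=\bbC(Y)$, and since $U\times M'$ is dense and $G$-stable in $M\oplus M'$, one has $\bbC(M\oplus M')^G=\bbC\big((U\times M')/G\big)$.

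Next, $U\times M'\to U$ is a $G$-equivariant vector bundle of rank $r:=\dim_{\bbC}M'$, so by faithfully flat descent along the torsor $q$ it descends to a rank-$r$ vector bundle $\mathcal V$ on $Y$; that is, $q^{*}\mathcal V\cong U\times M'$ compatibly with the $G$-actions (the trivial one on $\mathcal V$, the diagonal one on $U\times M'$). Passing to total spaces and quotienting by $G$, which acts only through the factor $U$, yields $(U\times M')/G\cong\operatorname{Tot}(\mathcal V)$ over $Y$, because $(U\times_Y\operatorname{Tot}(\mathcal V))/G=(U/G)\times_Y\operatorname{Tot}(\mathcal V)=\operatorname{Tot}(\mathcal V)$. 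A vector bundle is Zariski-locally trivial, so $\operatorname{Tot}(\mathcal V)$ is birational to $Y\times\bbA^{r}$; hence $\bbC(M\oplus M')^G=\bbC(\operatorname{Tot}(\mathcal V))$ is purely transcendental over $\bbC(Y)=\bbC(M)^G$, which is the claim.

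The only delicate point is the first step: producing an open $U$ on which $q$ is an honest $G$-torsor, not merely a geometric quotient, since the descent argument needs faithful flatness of $q$ together with the identification $G\times U\cong U\times_Y U$. One can avoid the explicit descent machinery by instead arguing over the generic point: the field extension $\bbC(M)$ of $K:=\bbC(M)^G$ is a $G$-torsor over $K$ (the generic stabilizers being trivial), i.e. $T:=\operatorname{Spec}\bbC(M)\to\operatorname{Spec}K$ is a $G$-torsor; pushing it out along the representation $G\to\GL(M')$ gives a $\GL_r$-torsor over $K$, which is trivial by Hilbert~90, so the associated twist $(T\times M')/G$ is isomorphic to $\bbA^{r}_K$. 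Since this twisted affine space has function field $\bbC(M\oplus M')^G$, we again obtain $\bbC(M\oplus M')^G=K(t_1,\dots,t_r)$, purely transcendental over $K=\bbC(M)^G$. Either way, the substantive input is the triviality of $\GL_r$-torsors; reductivity of $G$ is needed only to guarantee that the generic torsor $T$ over $K$ exists.
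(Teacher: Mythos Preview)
Your argument is correct and is precisely the standard proof of the no-name lemma; the paper itself gives no argument at all, merely citing Corollary~3.8 of Colliot-Th\'el\`ene and Sansuc, where one finds essentially the descent-plus-Hilbert-90 reasoning you have written out. So there is nothing to compare against, and your write-up would serve perfectly well as a self-contained replacement for that citation.

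One small inaccuracy worth flagging, though it does not affect validity for the lemma as stated: your final sentence claims that reductivity of $G$ is what guarantees the existence of the generic torsor $T$ over $K=\bbC(M)^G$. This overstates the case. Rosenlicht's theorem provides a geometric quotient on a dense invariant open for \emph{any} linear algebraic group over $\bbC$, and in characteristic zero the almost-freeness hypothesis alone then forces the generic fibre to be a $G$-torsor over $K$ (the action map $G_K\times_K T\to T\times_K T$ is bijective on points between smooth $K$-varieties of the same dimension, hence an isomorphism). Reductivity is a convenient sufficient hypothesis---it lets you invoke GIT on the stable locus directly, as you do in your first paragraph---rather than a necessary one, and most statements of the no-name lemma in the literature drop it. Also, the parenthetical ``which acts only through the factor $U$'' is momentarily confusing, since $G$ acts diagonally on $U\times M'$; it becomes clear only after one rewrites the total space as $U\times_Y\operatorname{Tot}(\mathcal V)$, as your next clause does.
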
 
\begin{proof}
  This statement is contained for example in \cite[Corollary 3.8]{colliotthelene-sansuc}.
\end{proof}
\begin{example} \label{example:no-name}
  Let the group $\PGL( V)$ with $V \cong \bbC^2$ act linearly on a finite-dimensional vector space $M'$ over $\bbC$. Then the field of invariants
  \begin{equation*}
    \bbC \big( \End( V)^2 \oplus M' \big)^{\PGL( V)}
  \end{equation*}
  is purely transcendental over the field \eqref{eq:PGL_2}, and hence rational.
\end{example}
\noindent We keep the notation of the previous section, so $V, W \cong \bbC^2$, and
\begin{equation*}
  G_n = \GL( V) \times \GL( W)/\iota_n( \Gm) \quad\text{with}\quad \iota_n( \lambda) := ( \lambda \id_V, \lambda^{-n} \id_W).
\end{equation*}
\begin{prop} \label{prop:X_rational}
  The coarse moduli space $X_n = G_n \backslash \GL( S^n V \otimes W)$ constructed in Proposition \ref{prop:X_n} is rational.
\end{prop}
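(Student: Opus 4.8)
The plan is to realise $\bbC(X_n)$ as a field of invariants and then trade dimension for simplicity of the acting group: first peel off free parameters with the no-name lemma until only a reductive group of rank $\leq 3$ survives, then bring that group down to $\PGL(V)$ and recognise the resulting quotient by means of Example \ref{example:no-name}.

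Since $X_n=G_n\backslash\GL(F)$ with $F:=S^nV\otimes W$ is a homogeneous space, $\bbC(X_n)=\bbC(\GL(F))^{G_n}$ for the left-multiplication action through $\pi_n$. As $\GL(F)$ is dense in $\End(F)=F\otimes F^\vee$ and $G_n$ acts only on the first factor, $\bbC(X_n)=\bbC(F^{\oplus N})^{G_n}$ with $N=\dim F=2(n+1)$. Now $G_n$ is reductive, being $\GL(V)\times\GL(W)$ modulo a central torus; and a dimension estimate for the locus of $2$-planes in $F$ that are fixed pointwise by some nontrivial element of $G_n$ — in the spirit of the proof of Lemma \ref{lemma:free} — shows that $G_n$ acts almost freely on $F^{\oplus 2}$ whenever $n\geq 2$. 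Writing $F^{\oplus N}=F^{\oplus 2}\oplus F^{\oplus(N-2)}$, Lemma \ref{lemma:no-name} then makes $\bbC(X_n)$ purely transcendental over $\bbC(F^{\oplus 2})^{G_n}$.

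The next step removes $\GL(W)$. Using $F^{\oplus 2}=\Hom(W^\vee,S^nV)\otimes\bbC^2=\Hom\bigl(W^\vee,\,S^nV\otimes\bbC^2\bigr)$, the $\GL(W)$-action becomes precomposition by $\GL(W^\vee)$, and the quotient on the open set of injective maps is the Grassmannian $\Grass_2(S^nV\otimes\bbC^2)$; the residual $\GL(V)$-action on it factors through $\PGL(V)$ since the centre of $\GL(V)$ acts by homotheties on $S^nV\otimes\bbC^2$. Hence $\bbC(F^{\oplus 2})^{G_n}\cong\bbC\bigl(\Grass_2(S^nV\oplus S^nV)\bigr)^{\PGL(V)}$, and it remains to prove this last field rational for $n\geq 2$ (the case $n=1$, where $F=V\otimes W$, being elementary).

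Finally, a generic $2$-plane $U\subseteq S^nV\oplus S^nV$ is the graph of an isomorphism between its two projections $P_1,P_2\in\Grass_2(S^nV)$, so $\Grass_2(S^nV\oplus S^nV)$ is, $\PGL(V)$-equivariantly and birationally, a $\GL_2$-torsor over $\Grass_2(S^nV)\times\Grass_2(S^nV)$. Since $\PGL(V)$ acts almost freely on $\Grass_2(S^nV)\times\Grass_2(S^nV)$ for all $n\geq 2$, and a $\GL_2$-torsor is Zariski-locally trivial (being equivalent to a rank-$2$ vector bundle), descent along the quotient shows that $\bbC\bigl(\Grass_2(S^nV\oplus S^nV)\bigr)^{\PGL(V)}$ is purely transcendental over $\bbC\bigl(\Grass_2(S^nV)\times\Grass_2(S^nV)\bigr)^{\PGL(V)}$. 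For $n=2$ the latter has transcendence degree one and is unirational, hence rational; for $n\geq 3$ I would continue with the slice and no-name lemmas — exploiting the description of a pencil of binary $n$-forms by its ramification divisor and the structure of the symmetric powers of $V$ as $\PGL(V)$-modules — to bring it into the form $\bbC(\End(V)^2\oplus M')^{\PGL(V)}$, rational by Example \ref{example:no-name}. The main obstacle is exactly this last reduction for general $n$: producing $\PGL(V)$-equivariant slices and linear models that land the iterated Grassmannian quotients in the range of Example \ref{example:no-name}.
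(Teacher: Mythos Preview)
Your reduction of $\bbC(X_n)$ to a purely transcendental extension of $K_0:=\bbC\bigl(\Grass_2(S^nV\oplus S^nV)\bigr)^{\PGL(V)}=\bbC(F^{\oplus 2})^{G_n}$ is correct and agrees with the paper. The gap is what comes next: you try to prove $K_0$ itself rational, and for $n\geq 3$ you cannot close the argument --- as you admit in your last sentence. Reducing further to $\bbC\bigl(\Grass_2(S^nV)^2\bigr)^{\PGL(V)}$ is legitimate, but the rationality of that field for general $n$ is not easier than the original problem; the vague appeal to ``slice and no-name lemmas'' and ``ramification divisors of pencils'' is not a proof.

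The paper sidesteps this entirely, and the missing idea is that you never need $K_0$ to be rational. Once $G_n$ acts almost freely on $F^{\oplus 2}$, the no-name lemma says that $\bbC(F^{\oplus 2}\oplus M')^{G_n}$ is purely transcendental over $K_0$ for \emph{every} linear $G_n$-module $M'$. So choose $M'$ cleverly rather than setting $M'=0$. The paper takes $M'=\End(V)^2\oplus(V(n)\otimes W)$, where $V(n)$ is a two-dimensional $\GL(V)$-representation on which the centre acts with weight $n$ (namely $\bbC^2\otimes\det^{n/2}V$ for $n$ even, $V\otimes\det^{(n-1)/2}V$ for $n$ odd), so that $V(n)\otimes W$ is an honest $G_n$-module. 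Then both $\bbC(X_n)$ and $K_2:=\bbC(F^{\oplus 2}\oplus M')^{G_n}$ are purely transcendental over $K_0$, and since $\bbC(X_n)$ has the larger transcendence degree it is purely transcendental over $K_2$. Now $K_2$ is easy: $G_n$ has an open orbit on $V(n)\otimes W$ whose stabiliser projects isomorphically onto $\PGL(V)$, so Lemma~\ref{lemma:slice} gives $K_2\cong\bbC\bigl(F^{\oplus 2}\oplus\End(V)^2\bigr)^{\PGL(V)}$, which is rational by Example~\ref{example:no-name} with no further work. Introducing the auxiliary module $V(n)\otimes W$ to cut $G_n$ down to $\PGL(V)$, and the $\End(V)^2$ summand to land in Example~\ref{example:no-name}, is precisely the step your argument lacks.
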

\begin{proof}
  The function field of $X_n$ is by construction
  \begin{equation*}
    \bbC( X_n) \cong \bbC \big( (S^n V \otimes W)^{2n+2} \big)^{G_n}.
  \end{equation*}
  We start with the special case $n = 1$. The action of $G_1$ on $V \otimes W$ has an open orbit, whose points correspond to linear maps $\psi: W^{\dual} \to V$
  that are bijective. Thus Lemma \ref{lemma:slice} yields
  \begin{equation*}
    \bbC( X_1) \cong \bbC \big( (V \otimes W)^4 \big)^{G_1} \cong \bbC \big( (V \otimes W)^3 \big)^{\Stab_{G_1}( \psi)}.
  \end{equation*}
  The canonical projection $G_1 \twoheadrightarrow \PGL( V)$ restricts to an isomorphism
  \begin{equation*}
    \Stab_{G_1}( \psi) \longto[ \sim] \PGL( V).
  \end{equation*}
  Since the linear isomorphism
  \begin{equation*}
    V \otimes W \xrightarrow{(\psi^{-1})^{\dual}} V \otimes V^{\dual} = \End( V)
  \end{equation*}
  intertwines the action of $\Stab_{G_1}( \psi)$ with that of $\PGL( V)$, we conclude
  \begin{equation*}
    \bbC( X_1) \cong \bbC \big( \End( V)^3 \big)^{\PGL( V)}.
  \end{equation*}
  Using Example \ref{example:no-name}, it follows that $X_1$ is rational.

  For the rest of the proof, we assume $n \geq 2$. The group $\GL( V)$ acts on the $2$-dimensional vector space
  \begin{equation*}
    V( n) := \begin{cases} \bbC^2 \otimes \det^{\frac{n}{2}} V & \text{for $n$ even,}\\ V \otimes \det^{\frac{n-1}{2}} V & \text{for $n$ odd,} \end{cases}
  \end{equation*}
  in such a way that the center $\Gm \subseteq \GL( V)$ acts with weight $n$. Thus the action of $\GL( V) \times \GL( W)$ on the $4$-dimensional vector space 
  \begin{equation*}
    V( n) \otimes W
  \end{equation*}
  descends to an action of $G_n$. This action again has an open orbit, whose points correspond to bijective linear maps $\psi: W^{\dual} \to V( n)$.
  
  Viewing elements of $(S^n V \otimes W)^2$ as linear maps from $W^{\dual}$ to the direct sum $S^n V \oplus S^n V$,
  the open locus of injective linear maps is a $\GL( W)$-torsor over the Grassmannian $\Grass_2( S^n V \oplus S^n V)$.
  Thus $G_n$ acts almost freely on $(S^n V \otimes W)^2$, due to Lemma \ref{lemma:free}.

  It follows that the field $\bbC( X_n)$ is purely transcendental over the field
  \begin{equation*}
    \bbC \big( (S^n V \otimes W)^2 \oplus \End( V)^2 \oplus (V( n) \otimes W) \big)^{G_n},
  \end{equation*}
  since both are purely transcendental over $\bbC( (S^n V \otimes W)^2)^{G_n}$ according to Lemma \ref{lemma:no-name},
  the former of transcendence degree $2n(2n+2) \geq 24$, and the latter of transcendence degree $12$.

  Thus Lemma \ref{lemma:slice} yields that $\bbC( X_n)$ is purely transcendental over
  \begin{equation*}
    \bbC \big( (S^n V \otimes W)^2 \oplus \End( V)^2 \big)^{\Stab_{G_n}( \psi)}.
  \end{equation*}
  But this field is rational according to Example \ref{example:no-name}, since the stabiliser of $\psi$ in $G_n$ projects again isomorphically onto $\PGL( V)$.
\end{proof}
\begin{rem} \label{rem:obstruction}
  The conic bundle $C \to X_n$ with fibers $\bbP V$ is not Zariski-locally trivial; cf. \cite[Proposition 8.5]{spindler-trautmann}.
  It follows that the $G_n$-torsor $\GL( S^n V \otimes W) \to X_n$ is not Zariski-locally trivial either. 
  The obstruction against both is a Brauer class, which can be described as follows:

  The proof of Proposition \ref{prop:X_rational} shows that $\bbC( X_n)$ is actually purely transcendental over $\bbC( \End( V)^2)^{\PGL( V)}$.
  Over the latter, one has the so-called generic quaternion algebra; cf. \cite[Chapter 14]{saltman}. Its image in the Brauer group of $\bbC( X_n)$
  is the obstruction class in question.

  An equivalent way to state this is to say that the stack quotient of $\GL( S^n V \otimes W)$ modulo $\GL( V) \times \GL( W)$
  is birational to an affine space times the stack quotient of $\End( V)^2$ modulo $\GL( V)$.
  This can be proved along the same lines as Proposition \ref{prop:X_rational} above.
\end{rem}
\begin{rem} \label{rem:Poincare}
  The variety $X_n$, its rationality, and the local nontriviality of bundles over it in Remark \ref{rem:obstruction}, did not depend on $k$.
  But the existence of Poincar\'{e} families over $\bbP \times X_n$ does depend on $k$, as follows.

  Fix $k \geq 1$. The closed points $x \in X_n( \bbC)$ correspond to certain isomorphism classes of complexes
  \begin{equation*}
    E^{\bullet} = [E^0 \longto E^1] \in \Cpx( \bbP)
  \end{equation*}
  with $E^0 \cong \calO_{\bbP}^{2n+2(k+1)}$ and $E^1 \cong \calO_{\bbP}( 1)^{k+1}$. A complex of vector bundles
  \begin{equation*}
    \calE^{\bullet} = [\calE^0 \longto \calE^1] \in \Cpx( \bbP \times X_n) 
  \end{equation*}
  is a \emph{Poincar\'{e} family} if for every closed point $x \in X_n( \bbC)$, the corresponding isomorphism class contains the restriction of $\calE^{\bullet}$
  to $\bbP \times \{x\}$.

  There is a universal family $b^{\univ}$ of isomorphisms $b$, parameterized by $\GL( S^n V \otimes W)$; cf. Remark \ref{rem:families}.
  It induces a complex of vector bundles
  \begin{equation*}
    [E^0_{n, k} \boxtimes \calO_{\GL( S^n V \otimes W)} \xrightarrow{ p_{b^{\univ}}} E^1_{n, k} \boxtimes \calO_{\GL( S^n V \otimes W)}]
  \end{equation*}
  over $\bbP \times \GL( S^n V \otimes W)$. On this complex, $\GL( V) \times \GL( W)$ acts; the image of $\Gm$ under $\iota_n$ acts with weight $-k$.

  If $k$ is even, we can tensor the complex with the $1$-dimensional representation $\det^{k/2}( V)$ of $\GL( V) \times \GL( W)$.
  After that, the image of $\Gm$ under $\iota_n$ acts trivially, so the action of $\GL( V) \times \GL( W)$ descends to an action of $G_n$.
  Then the complex descends to a complex $\calE^{\bullet}$ over $\bbP \times X_n$, which is indeed a Poincar\'{e} family in the above sense.

  If $k$ is odd, then one can show that no Poincar\'{e} family $\calE^{\bullet}$ over $\bbP \times X_n$ exists; cf. \cite[Proposition 8.5]{spindler-trautmann}.

  One way to view this is to note that the moduli stack parameterizing the complexes in question does depend on $k$.
  It is in fact the stack quotient $\calX_{n, k}$ of $\GL(S^n V \otimes W)$ modulo the group $\GL( V) \times \GL( W)/\iota_n( \mu_k)$,
  which can be proved along the same lines as Proposition \ref{prop:X_n}.

  The stack $\calX_{n, k}$ is birational to an affine space times the stack quotient of $\End( V)^2$ modulo $\GL( V)/\mu_k$; cf. the previous remark.
  Thus the obstruction against Poincar\'{e} families is $k$ times the Brauer class over $\bbC( X_n)$ coming from the generic quaternion algebra.
  So we see again that the obstruction vanishes if and only if $k$ is even.
\end{rem}
\begin{thm} \label{mainthm}
  For $n \geq 1$ and $k \geq 1$, the coarse moduli space $M_{\bbP}( k+1)$ of special $(k+1)$-instanton bundles over $\bbP = \bbP^{2n+1}$ is rational.
\end{thm}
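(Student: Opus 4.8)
The plan is to express $\bbC( M_{\bbP}( k+1))$ as a field of invariants, reduce it by the no-name method exactly as in the proof of Proposition \ref{prop:X_rational} — but now dragging an extra Grassmannian factor along — and finish by recognising what remains as a twisted Grassmannian over $\bbC( \bbA^5)$ whose rationality is known. Concretely, since $M_{\bbP}( k+1)$ is a dense open subvariety of the Grassmannian bundle $\Grass_k( C^{(2n+k)}) \to X_n$, and this bundle is the associated bundle $Y \times^{G_n} \GL( S^n V \otimes W)$ with fibre $Y$ — the Grassmannian of $(k+1)$-dimensional subspaces of $(S^{2n+k} V)^{\dual}$, on which $G_n$ acts through its quotient $\PGL( V)$ via $S^{2n+k}$ — the first step is to record
\begin{equation*}
  \bbC( M_{\bbP}( k+1)) \cong \bbC\big( (S^n V \otimes W)^{2n+2} \times Y\big)^{G_n},
\end{equation*}
using, as in Proposition \ref{prop:X_rational}, that $\GL( S^n V \otimes W) \subseteq \End( S^n V \otimes W) \cong (S^n V \otimes W)^{2n+2}$ as $G_n$-representations. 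This differs from $\bbC( X_n)$ only by the presence of the factor $Y$.

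\emph{The reduction.} The goal is to carry out the reductions of Proposition \ref{prop:X_rational} with $Y$ present, ending at $\bbC( \End( V)^2 \times Y)^{\PGL( V)}$. For $n \geq 2$, the group $G_n$ acts almost freely on $(S^n V \otimes W)^2$ by Lemma \ref{lemma:free}, hence also on $(S^n V \otimes W)^2 \times Y$. Applying the no-name method (in its evident form for a $G$-equivariant vector bundle over a $G$-variety on which $G$ acts almost freely, cf. \cite{colliotthelene-sansuc}) to strip off, on the one hand the $2n$ remaining summands $S^n V \otimes W$, and on the other hand an auxiliary $\End( V)^2 \oplus (V( n) \otimes W)$, I get that $\bbC( M_{\bbP}( k+1))$ and $\bbC\big( (S^n V \otimes W)^2 \oplus \End( V)^2 \oplus (V( n) \otimes W) \times Y\big)^{G_n}$ are both purely transcendental over $\bbC\big( (S^n V \otimes W)^2 \times Y\big)^{G_n}$, of transcendence degrees $2n( 2n+2)$ and $12$; since $2n( 2n+2) \geq 12$, the former is purely transcendental over a subfield abstractly isomorphic to the latter (arguing as in Proposition \ref{prop:X_rational}). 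Then Lemma \ref{lemma:slice} applied to the open $G_n$-orbit in $V( n) \otimes W$ — with $\Stab_{G_n}( \psi) \cong \PGL( V)$ — followed by one more no-name step, lands us at $\bbC( \End( V)^2 \times Y)^{\PGL( V)}$. For $n = 1$ the same field is reached more directly: Lemma \ref{lemma:slice} for the open $G_1$-orbit in $V \otimes W$, the isomorphism $V \otimes W \cong \End( V)$, and a no-name step removing the third copy of $\End( V)$.

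\emph{The core.} It remains to show $\bbC( \End( V)^2 \times Y)^{\PGL( V)}$ is rational. Since $\PGL( V)$ acts almost freely on $\End( V)^2$ with $\End( V)^2 /\!/ \PGL( V) \cong \bbA^5$, the generic fibre of $\End( V)^2 \to \bbA^5$ is a $\PGL( V)$-torsor over $K := \bbC( \bbA^5)$ whose class is that of the generic quaternion algebra $Q$ (Remark \ref{rem:obstruction}). Hence $\bbC( \End( V)^2 \times Y)^{\PGL( V)}$ is the function field of the twist of the Grassmannian $Y$ by this torsor; as $\PGL( V)$ acts on $Y$ through $S^{2n+k}\colon \PGL( V) \to \PGL_{2n+k+1}$, this twist is the generalised Severi-Brauer variety of a central simple $K$-algebra $A$ of degree $2n+k+1$ with Brauer class $(2n+k)\,[Q]$. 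If $k$ is even then $(2n+k)\,[Q] = 0$, $A$ is split, and the twist is an ordinary Grassmannian over $K$, which is $K$-rational. If $k$ is odd then $2n+k$ is odd, $k+1$ is even, $A$ is Brauer-equivalent to the nonsplit generic quaternion algebra with $A \cong M_m( Q)$ for $m = (2n+k+1)/2$, and the twist is the quaternionic Grassmannian $\Grass_{(k+1)/2}( Q^m)$; it too is $K$-rational, since the locus of $Q$-submodules transverse to a fixed coordinate $Q$-submodule is a dense open subvariety isomorphic to an affine space over $K$ — this being the rationality input for Severi-Brauer-type varieties borrowed from \cite{hirschowitz-narasimhan}. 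As $K = \bbC( \bbA^5)$ is $\bbC$-rational, it follows in every case that $\bbC( M_{\bbP}( k+1))$ is purely transcendental over $\bbC$.

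\emph{Main obstacle.} The delicate point is the odd-$k$ case of the last step: precisely because no Poincar\'e family of special instanton bundles exists over $\bbP \times X_n$ for odd $k$ (Remark \ref{rem:Poincare}), the relevant twist is not a Grassmannian of a $K$-vector space, so one cannot simply split off its fibre; instead one must identify it as a quaternionic Grassmannian and invoke the rationality of those. The rest — bookkeeping of the no-name reductions, the transcendence-degree inequality $2n( 2n+2) \geq 12$, the computation of the Brauer class of the representation $S^{2n+k}$, and organising the cases $n = 1$ vs. $n \geq 2$ and $k$ even vs. $k$ odd into a single argument — is routine.
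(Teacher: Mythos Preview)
Your argument is correct and uses the same essential ingredients as the paper --- the no-name reduction to $\PGL(V)$-invariants and the parity-based rationality of twisted Grassmannians --- but the packaging differs. The paper proceeds modularly: having already established that $X_n$ is rational in Proposition~\ref{prop:X_rational}, it simply observes that $M_{\bbP}(k+1)$ is open in the Grassmannian bundle $\Grass_k(C^{(2n+k)})$ over $X_n$ associated to the conic bundle $C$, and concludes by invoking \cite[Proposition~2.2]{hirschowitz-narasimhan}, which gives exactly that such a bundle has rational generic fibre when $k$ and $2n+k$ have the same parity. You instead carry the Grassmannian factor $Y$ through the no-name reductions from the start, land over $\bbC(\bbA^5)$ rather than over $\bbC(X_n)$, and then re-derive the content of the Hirschowitz--Narasimhan input by hand via the explicit quaternionic-Grassmannian big-cell argument. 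Your version is more self-contained and makes the role of the Brauer class $(2n+k)[Q]$ fully explicit; the paper's three-line proof is shorter and cleanly separates the two logically independent pieces (rationality of the base, rationality of the generic fibre over the base).
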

\begin{proof}
  Recall that $M_{\bbP}( k+1)$ is constructed as an open subvariety in the Grassmannian bundle $\Grass_k( C^{(2n+k)})$ over $X_n$.
  We have just seen that $X_n$ is rational. Using \cite[Proposition 2.2]{hirschowitz-narasimhan},
  it follows that $\Grass_k( C^{(2n+k)})$ is also rational, since $k$ and $2n+k$ have the same parity. 
\end{proof}
\begin{rem}
  In the special case $n = 1$, which means $\bbP = \bbP^3$, the rationality of $M_{\bbP}( k+1)$
  has already been proved by Hirschowitz and Narasimhan \cite[Th\'{e}or\`{e}me 4.10]{hirschowitz-narasimhan}.
  They also prove the rationality of $X_1$ \cite[Th\'{e}or\`{e}me 3.4.II]{hirschowitz-narasimhan}; their proof is different from the one given here.
\end{rem}
\begin{rem}
  What \cite[Proposition 2.2]{hirschowitz-narasimhan} proves is that the Grassmannian bundle $\Grass_k( C^{(2n+k)}) \to X_n$ has rational generic fiber.
  This does not necessarily mean that it is Zariski-locally trivial.
  One can show that it is Zariski-locally trivial if and only if $k$ is even; cf. Remark \ref{rem:Poincare}.
\end{rem}
\begin{rem}
  \cite[Theorem 8.2]{spindler-trautmann} states that there is a Poincar\'{e} family of special instanton bundles $\calE$ over $\bbP \times M_{\bbP}( k+1)$
  if and only if $k$ is even. In fact the obstruction class is the pullback of the obstruction class on $X_n$ explained in Remark \ref{rem:Poincare}.

  An equivalent way to state this is to say that the moduli stack $\calM_{\bbP}( k+1)$ of special instanton bundles is birational
  to an affine space times the stack $\calX_{n, k}$ in Remark \ref{rem:Poincare}. Observing that $\calM_{\bbP}( k+1)$ is open
  in a Grassmannian bundle over $\calX_{n, k}$, this can also be proved using Lemma 5.5 and Lemma 4.10 in \cite{par}.
\end{rem}

\bibliography{inst}

\begin{thebibliography}{10}

\bibitem{ancona-ottaviani}
V.~Ancona and G.~Ottaviani.
\newblock {Stability of special instanton bundles on $\bbP\sp{2n+1}$.}
\newblock {\em Trans. Am. Math. Soc.}, 341(2):677--693, 1994.

\bibitem{ADHM}
M.F. Atiyah, V.G. Drinfeld, N.J. Hitchin, and Yu.I. Manin.
\newblock {Construction of instantons.}
\newblock {\em Phys. Lett. A}, 65:185--187, 1978.

\bibitem{barth-hulek}
W.~Barth and K.~Hulek.
\newblock {Monads and moduli of vector bundles.}
\newblock {\em Manuscr. Math.}, 25:323--347, 1978.

\bibitem{beilinson}
A.A. Beilinson.
\newblock {Coherent sheaves on $P^n$ and problems of linear algebra.}
\newblock {\em Funct. Anal. Appl.}, 12:214--216, 1979.

\bibitem{capria-salamon}
M.~Mamone Capria and S.~M. Salamon.
\newblock Yang-{M}ills fields on quaternionic spaces.
\newblock {\em Nonlinearity}, 1(4):517--530, 1988.

\bibitem{colliotthelene-sansuc}
J.-L. Colliot-Th\'el\`ene and J.-J. Sansuc.
\newblock {The rationality problem for fields of invariants under linear
  algebraic groups.}
\newblock In V.~Mehta, editor, {\em Algebraic groups and homogeneous spaces
  (Mumbai 2004)}, pages 113--186, 2007.

\bibitem{corrigan-goddard-kent}
E.~Corrigan, P.~Goddard, and A.~Kent.
\newblock Some comments on the {ADHM} construction in {$4k$} dimensions.
\newblock {\em Comm. Math. Phys.}, 100(1):1--13, 1985.

\bibitem{costa-miroroig}
L.~Costa and R.M. Mir\'o-Roig.
\newblock {Rationality of moduli spaces of vector bundles on Hirzebruch
  surfaces.}
\newblock {\em J. Reine Angew. Math.}, 509:151--166, 1999.

\bibitem{asterisque}
A.~Douady and J.-L. Verdier, editors.
\newblock {\em {Les \'equations de Yang-Mills. S\'eminaire E.N.S. 1977-1978}},
  Ast\'erisque 71-72. {Paris: Soc. Math. France}, 1980.

\bibitem{hirschowitz-narasimhan}
A.~Hirschowitz and M.S. Narasimhan.
\newblock {Fibres de 't Hooft speciaux et applications.}
\newblock In {\em Enumerative geometry and classical algebraic geometry (Nice
  1981)}, pages 143--164. Progress in Math. 24, Birkh{\"a}user, 1982.

\bibitem{par}
N.~Hoffmann.
\newblock {Rationality and Poincar\'e families for vector bundles with extra
  structure on a curve.}
\newblock {\em Int. Math. Res. Not.}, Article ID rnm010:30 p., 2007.

\bibitem{rationality}
N.~Hoffmann.
\newblock Moduli stacks of vector bundles on curves and the {K}ing-{S}chofield
  rationality proof.
\newblock In F.~Bogomolov and Y.~Tschinkel, editors, {\em Cohomological and
  geometric approaches to rationality problems. New perspectives}, volume 282
  of {\em Progr. Math.}, pages 133--148. Birkh\"auser, Boston, 2010.

\bibitem{horrocks}
G.~Horrocks.
\newblock {Vector bundles on the punctured spectrum of a local ring.}
\newblock {\em Proc. London Math. Soc., III. Ser.}, 14:689--713, 1964.

\bibitem{kametani-nagatomo}
Y.~Kametani and Y.~Nagatomo.
\newblock Construction of {$c_2$}-self-dual bundles on a quaternionic
  projective space.
\newblock {\em Osaka J. Math.}, 32(4):1023--1033, 1995.

\bibitem{king-schofield}
A.~King and A.~Schofield.
\newblock {Rationality of moduli of vector bundles on curves.}
\newblock {\em Indag. Math., New Ser.}, 10(4):519--535, 1999.

\bibitem{markushevich-tikhomirov}
D.~Markushevich and A.S. Tikhomirov.
\newblock Rationality of instanton moduli.
\newblock preprint arXiv:1012.4132.
\newblock \verb|http://www.arXiv.org|.

\bibitem{okonek-schneider-spindler}
C.~Okonek, M.~Schneider, and H.~Spindler.
\newblock {\em {Vector bundles on complex projective spaces.}}
\newblock {Birkh\"auser, Boston - Basel - Stuttgart}, 1980.

\bibitem{okonek-spindler}
C.~Okonek and H.~Spindler.
\newblock {Mathematical instanton bundles on ${\bbP}\sp{2n+1}$.}
\newblock {\em J. Reine Angew. Math.}, 364:35--50, 1986.

\bibitem{salamon}
S.M. Salamon.
\newblock {Quaternionic structures and twistor spaces.}
\newblock In T.J. Willmore and N.J. Hitchin, editors, {\em Global Riemannian
  geometry (Durham 1982)}, pages 65--74, 1984.

\bibitem{saltman}
D.J. Saltman.
\newblock {\em {Lectures on division algebras.}}
\newblock {Regional Conference Series in Mathematics, 94. Providence, RI:
  American Mathematical Society}, 1999.

\bibitem{schofield}
A.~Schofield.
\newblock {Birational classification of moduli spaces of vector bundles over
  $\bbP\sp{2}$}.
\newblock {\em Indag. Math., New Ser.}, 12(3):433--448, 2001.

\bibitem{spindler-trautmann}
H.~Spindler and G.~Trautmann.
\newblock {Special instanton bundles on $\bbP\sb{2N+1}$, their geometry and
  their moduli.}
\newblock {\em Math. Ann.}, 286(1-3):559--592, 1990.

\bibitem{tikhomirov}
A.S. Tikhomirov.
\newblock Moduli of mathematical instanton vector bundles with odd $c_2$ on
  projective space.
\newblock preprint arXiv:1101.3016.
\newblock \verb|http://www.arXiv.org|.

\end{thebibliography}
\end{document}